\documentclass[letterpaper,final,10pt]{amsart}


\usepackage{inputenc}

\usepackage{amsmath}
\usepackage{amsfonts}
\usepackage{amssymb}
\usepackage{graphicx}
\usepackage{mathrsfs}
\usepackage[obeyspaces]{url}
\usepackage[all,cmtip,curve,knot]{xy}
\usepackage{tikz}
\usetikzlibrary{arrows}
\tikzstyle{block}=[draw opacity=0.7,line width=1.4cm]
\newtheorem{theorem}{Theorem}

\newtheorem{corollary}{Corollary}

\newtheorem{definition}{Definition}

\newtheorem{lemma}{Lemma}

\newtheorem{remark}{Remark}

\numberwithin{equation}{section}
\numberwithin{theorem}{section}
\numberwithin{lemma}{section}
\numberwithin{corollary}{section}
\numberwithin{definition}{section}
\numberwithin{example}{section}
\numberwithin{remark}{section}
\numberwithin{property}{section}
\numberwithin{proposition}{section}

\newcommand{\NS}[3][1]{#2_#1,\ldots,#2_#3}

\newcommand{\lift}[3][\pi]{\xymatrix{#2 \ar@{~>}[r]_{#1} & #3}}

\newcommand{\diag}[2][]{\mathrm{diag}_{#1}\left[ #2 \right]}

\newcommand{\U}[1]{\mathbb{U}(#1)}

\newcommand{\I}{\mathbf{1}}

\newcommand{\RR}{\mathbb{R}}
\newcommand{\CC}{\mathbb{C}}
\newcommand{\ZZ}{\mathbb{Z}}

\newcommand{\TT}[1][1]{{\mathbb{T}^{#1}}}

\newcommand{\disk}[1][2]{{\mathbb{D}^{#1}}}


\begin{document}
\title{On Uniform Connectivity of Algebraic Matrix Sets}
\author{Fredy Vides}
\address{Department of Applied Mathematics, School of Mathematics, Faculty of Science, 
Universidad Nacional Aut\'onoma de Honduras, Ciudad Universitaria, Tegucigalpa, Honduras.}

\email{fredy.vides@unah.edu.hn}

\keywords{Joint spectrum, Matrix function, Functional calculus, Algebraic set, Matrix path, Eigenvalue clustering.}

\subjclass[2010]{47N40, 47A58 (primary) and 15A60, 15A27 (secondary).} 

\date{\today}

\begin{abstract}
In this document we study the uniform local path connectivity of sets of $m$-tuples of pairwise commuting normal matrices with some additional constraints. 

More specifically, given given $\varepsilon>0$, a fixed metric $\eth$ in ${M_n(\mathbb{C})}^m$ induced by the operator norm $\|\cdot\|$, any collection of $r$ non-constant multivariable polynomials $p_1(x_1,\ldots,x_m),\ldots,p_r(x_1,\ldots,x_m)$ over $\mathbb{C}$ with finite zero set $\mathbf{Z}(p_1,\ldots,p_r)\subset \mathbb{C}^m$, and any $m$-tuple $\mathbf{X}=(X_1,\ldots,X_m)$ in the set $\mathbb{ZD}_n^m(p_1,\ldots,p_r)\subseteq M_n^m(\mathbb{C})$, of pairwise commuting normal matrix contractions such that,  $\|p_j(Y_1,\ldots,Y_m)\|=0$ for each $(Y_1,\ldots,Y_m)\in \mathbb{ZD}_n^m(p_1,\ldots,p_r)$ and each $1\leq j\leq r$. We prove the existence of paths between arbitrary $m$-tuples, that lie in the intersection of $\mathbb{ZD}_n^m(p_1,\ldots,p_r)$, and the $\delta$-ball $B_\eth(\mathbf{X},\delta)$ centered at $\mathbf{X}$ for some $\delta>0$, with respect to $\eth$. 

Two of the key features of these matrix paths is that $\delta$ can be chosen independent of $n$, and that they are contained in the intersection of $B_\eth(\mathbf{X},\varepsilon)$ and $\mathbb{ZD}_n^m(p_1,\ldots,p_r)$.

Some connections with the approximation theory for matrix functions of several matrix variables, are studied as well.
\end{abstract}

\maketitle

\section{Introduction}
\label{intro}

In this document we study the uniform local path connectivity of sets of $m$-tuples of commuting normal matrices with some additional geometric and algebraic constraints in their joint spectra. 

Let $\varepsilon>0$ be given. Given a fixed metric $\eth$ in ${M_n(\mathbb{C})}^m$ induced by the operator norm $\|\cdot\|$, any collection of $r$ non-constant polynomials $p_1(x_1,\ldots,x_m),\ldots,p_r(x_1,\\
\ldots,x_m)$  of $m$ complex variables, with coefficients over $\CC$ and finite zero set 
\[
\mathbf{Z}(p_1,\ldots,p_r)=\{\mathbf{x}\in \mathbb{C}^m|p_j(\mathbf{x})=0,1\leq j\leq r\}\subset \CC^m,
\]
and any $m$-tuple $\mathbf{X}=(X_1,\ldots,X_m)\in {M_n(\mathbb{C})}^m$, of pairwise commuting normal matrix contractions, such that $\|p_j(X_1,\ldots,X_m)\|=0$ for each $1\leq j\leq r$. We prove the existence of paths between arbitrary $m$-tuples, that lie in the intersection of the set of $m$-tuples of pairwise commuting normal matrix contractions in ${M_n(\mathbb{C})}^m$, and the $\delta$-ball 
\[
B_\eth(\mathbf{X},\delta)=\{\mathbf{Y}\in M_n(\CC)^m|\eth(\mathbf{X},\mathbf{Y})<\delta\}
\]
centered at $\mathbf{X}$ for some $\delta>0$, with respect to $\eth$. 

Two of the key features of these matrix paths is that $\delta$ can be chosen independent of $n$, and that they are contained in the intersection of $B_\eth(\mathbf{X},\varepsilon)$, and the set $\mathbb{ZD}_n^m(p_1,\ldots,p_r)$ of $m$-tuples of pairwise commuting normal matrix contractions such that, $\|p_j(Y_1,\ldots,Y_m)\|=0$ for each $(Y_1,\ldots,Y_m)\in \mathbb{ZD}_n^m(p_1,\ldots,p_r)$ and each $1\leq j\leq r$. 

Let $\epsilon>0$ and $m\in \mathbb{Z}^+$ be given. The reason why independence on matrix size $n$ (uniformity) is important in this study, is that in many applications one needs to perform computations such as matrix inversion or matrix decompostion/factorization, with some approximations $\{\mathbf{X}_k\}_{k\geq 1}$ of sequences of matrix $m$-tuples $\{\mathbf{Y}_k\}_{k\geq 1}$ such that, $\mathbf{X}_k,\mathbf{Y}_k\in M_{n_k}(\mathbb{C})^m$ and $\|\mathbf{X}_k-\mathbf{Y}_k\|\leq \epsilon$ for $k\geq 1$, and $\{n_k\}_{k\geq 1}\subseteq\mathbb{Z}^+$ is an increasing sequence of positive integers. In these circumstances, one needs that some properties of $\{\mathbf{X}_k\}_{k\geq 1}$, to be (uniform) independent of the matrix size $n_k$. 

A common technique, implemented in order to achive uniformity in the approximation of the sequences mentioned before, consists in {\bf preconditioning} each $m$-tuple $\mathbf{X}_k\in M_{n_k}(\mathbb{C})^m$ of the original sequence $\{\mathbf{X}_k\}_{k\geq 1}$ to obtain a sequence $\{\hat{\mathbf{X}}_k\}_{k\geq 1}$, with some desirable {\bf artificial} properties. In these document we focus on spectral artificial properties, more specifically on {\bf eigenvalue clustering} in the sense of \cite{Applied_Spectrum_Clustering_1,Applied_Spectrum_Clustering_3,Applied_Spectrum_Clustering_4}. 

It is worth mentioning that 
some times, eigenvalue clustering appears naturally in numerical models, before any preconditioning has been performed, as an example, one can consider lossless Drude dispersive
metallic photonic crystals in the sense of \cite{Applied_Spectrum_Clustering_3}.

The main sources of motivation for the reserach reported in this document came from structure preserving perturbation theory 
in the sense of \cite{Perturbation_Precond_Matrix_Coeff}, specially by the effect that {\bf preconditioning} in the sense of \cite{Matrix_Algebras_Preconditioners,Approximate_Eigenvectors, Projective_Preconditioners,Preconditioning_Matrix_Algebras, Applied_Spectrum_Clustering_4,Wathen_Preconditioning}, has on the numerical solution of linear systems of equations and eigenvalue/diagonalization problems, which are two of the main problems in numerical linear algebra. Another sources of motivation were, the perturbation theory of matrix polynomials in the sense of \cite{weakly_normal_polynomials}, and the simultaneous block-diagonalization of matrices in the sense of \cite{sim_block_diag}.

In this document we combine the duality between matrix paths and numerical linear algebra algorithms studied in \cite{Chu_num_lin}, with the geometric approach to matrix perturbation theory presented in \cite{Geometric_matrix_preturbation_theory}, in order to derive a topological approach to the solution of normal matrix approximation problems, by interpreting structure preserving (and almost structure preserving) numerical approximation/refinement problems, as {\bf algebraically constrained} topological conectivity problems in the metric {\bf matrix} space $(M_n(\CC)^m,\eth)$.

Matrix sets like $\mathbb{ZD}_n^m(p_1,\ldots,p_r)$, are called algebraic matrix sets in this document. The path connectivity  properties of some important families of algebraic matrix sets will be studied in \S\ref{main_results}. In order to extend the applicability of these connectivity results, some connections with the approximation theory for matrix functions of several normal ({\bf preconditioned}) matrix variables are studied in \S\ref{extended_main_results}. 

One of the reasons for exending the results in \S\ref{main_results} to \S\ref{extended_main_results}, comes from the study of the notion of {\bf approximate solvability} and stability, of iterative (Krylov space based) and direct methods implemented to solve linear algebra problems numerically, specially when the computation is performed with finite precision in the sense of \cite{Matrix_Functions_Finite_Precision}. 

\section{Preliminaries and Notation}

\label{notation}

Given $r$ polynomials $p_1(x_1,\ldots,x_m),\ldots,p_r(x_1,\ldots,x_m)$ of $m$ complex variables ,with coefficients over $\CC$, denote by $\mathbf{Z}(p_1,\ldots,p_r)$ the subset of $\CC^m$ determined by the expression.
\begin{equation}
\mathbf{Z}(p_1,\ldots,p_r)=\{(x_1,\ldots,x_m)\in \CC^m\: |\: p_j(x_1,\ldots,x_m)=0,1\leq j\leq r\}
\label{zero_set_definition}
\end{equation}

We will write $M_{m,n}$ to denote the set $M_{m,n}(\mathbb{C})$ of $m\times n$ complex matrices, if $m=n$ we will write $M_n$, we write $M_n^m$ to denote the set of $m$-tuples of $n\times n$ complex matrices. The symbols $\mathbf{1}_n$ and $\mathbf{0}_{m,n}$ will be used to denote the identity matrix and the zero matrix in $M_n$ and $M_{m,n}$ respectively, if $m=n$ we will write $\mathbf{0}_{n}$. Given a matrix $A\in M_n$, we write $A^\ast$ to denote the conjugate transpose $\bar{A}^\top$ of $A$. 

A matrix $X\in M_n$ is said to be normal if $XX^\ast=X^\ast X$, a matrix $H\in M_n$ is said to be hermitian if $H^\ast=H$, a matrix $K\in M_n$ is said to be skew hermitian if $K^\ast=-K$, and a 
matrix $U\in M_n$ such that $U^\ast U=UU^\ast=\mathbf{1}_n$ is called unitary. We will write $\mathbf{i}$ to denote the number $\sqrt{-1}$. Given any set $S$, we will write $|S|$ to denote the number of elements of $S$, counted without multiplicity.

Let $(X,d)$ be a metric space. We say that $\tilde{X}_\delta\subset X$ is a $\delta$-dense subset of $X$ if for all $x\in X$ there exists $\tilde{x}\in \tilde{X}_\delta$ such that $d(x,\tilde{x})\leq\delta$.

Given two locally compact Hausdorff spaces $X,Y$ we write $C(X,Y)$ and $C^1(X,Y)$ to denote the sets of continuous and $C^1$ (differentiable) functions between $X$ and $Y$, respectively.

From here on $\|\cdot\|$ denotes the operator norm defined for any $A\in M_n$ by $\|A\|:=\sup_{\|x\|_2=1}\|Ax\|_2$, where $\|\cdot\|_2$ denotes the Euclidean norm in $\CC^n$. Let us denote by $\eth$ the metric in $M_n^m$ defined by $\eth:M_n^m\times M_n^m\to \RR^+_0,(\mathbf{S},\mathbf{T})\mapsto \max_j \|S_j-T_j\|$. We will write $\disk[m]$ and $\TT[m]$ to denote the $m$-dimensional closed unit disk and the $m$-dimensional torus respectively. Given any $x_0\in \CC^m$, we will write $B(x_0,r)$ to denote the $r$-ball 
$\{x\in \CC^m\: | \: \|x-x_0\|_2< r\}$ in $\CC^m$.

In this document by a matrix contraction we mean a matrix $X$ in $M_n$ such that $\|X\|\leq 1$.  A matrix $P\in M_n$ such that $P^\ast=P=P^2$ is called a projector or projection. Given two projectors $P$ and $Q$, if $PQ=QP=\mathbf{0}_n$ we say that $P$ and $Q$ are orthogonal. By an orthogonal partition of unity (OPU) in $M_n$, we mean a finite set of pairwise orthogonal projectors $\{P_j\}$ in $M_n\backslash \{\mathbf{0}_n\}$ such that 
$\sum_j P_j=\mathbf{1}_n$. We will omit the explicit reference to $M_n$ when it is clear from the context.

Given any two matrices $X,Y\in M_n$ we will write $[X,Y]$ and $\mathrm{Ad}[X](Y)$ to denote the operations 
$[X,Y]:=XY-YX$ and $\mathrm{Ad}[X](Y):=XYX^*$.

A {\bf $\ast$-homomorphism} $\varphi:M_n\to M_n$ in $M_n$ is a linear and multiplicative map wich satisfies 
$\varphi(X^\ast)=\varphi(X)^\ast$ for all $X$ in $M_n$. Given $U\in \U{n}$, it can be easily verified that the map $\psi:M_n\to M_n$ defined by $\psi:=\mathrm{Ad}[U]$ is a $\ast$-homomorphism, any $\ast$-homomorphism of this form will be called an inner $\ast$-homomorphism.

Given any map $\Psi:M_n\to M_n$ in $M_n$, we write $\hat{\Psi}$ to denote the extended map $\hat{\Psi}:M_n^m\to M_n^m$ in $M_n^m$ determined by the assignment 
$\hat{\Psi}:(X_1,\ldots,X_m)\mapsto(\Psi(X_1),\ldots,\Psi(X_m))$ for all $(X_1,\ldots,X_m)$ in $M_n^m$.

Let $\mathbb{GL}_n$ denote the set of invertible elements in $M_n$. Given a matrix $A\in M_n$, we write $\sigma(X)$ to denote the set $\{\lambda\in \CC \: | \: A-\lambda\mathbf{1}_n\notin \mathbb{GL}_n\}$ of eigenvalues of $A$, the set $\sigma(A)$ is called the spectrum of $A$.

\begin{definition}[$\circledast$ operation]
 Given two matrix paths $\alpha,\beta\in C([0,1],M_n^m)$ we write ${\alpha \circledast \beta}$ to denote the concatenation of $\alpha$ and $\beta$, which is 
 the matrix path defined in terms of $\alpha$ and $\beta$ by the expression,
 \[
  {\alpha\circledast \beta}(s):=
  \left\{
  \begin{array}{l}
   \alpha(2s),\:\: 0\leq s\leq \frac{1}{2},\\
   \beta(2s-1),\:\: \frac{1}{2}\leq s\leq 1.
  \end{array}
  \right.  
 \]
\end{definition}

Given a matrix $A\in M_n$, we will write $\mathscr{D}(A)$ or $\mathrm{diag}[a_{11},a_{22},\ldots,a_{nn}]$ to denote the diagonal matrix defined by the following operation.
\begin{eqnarray}
\mathscr{D}(A)&:=&\mathrm{diag}[a_{11},a_{22},\ldots,a_{nn}]\\
&=&
\left(
\begin{array}{cccc}
a_{11} & 0 & \cdots & 0\\
0 & a_{22} & \cdots & \vdots\\
\vdots & \ddots & \ddots & 0\\
0 & \cdots & 0 & a_{nn}
\end{array}
\right)
\end{eqnarray}

It can be seen that $\mathscr{D}(\mathscr{D}(A))=\mathscr{D}(A)$ for any $A\in M_n$, the map $\mathscr{D}$ will be called the full pinching. 

\begin{remark}
By pinching inequalities (in the sense of \cite{Bhatia_matrix_inequalities}) we will have that $\|\mathscr{D}(A)-\mathscr{D}(B)\|=\|\mathscr{D}(A-B)\|\leq \|A-B\|$ for any two matrices $A,B\in M_n$.
\end{remark}

It is often convenient to have $N$-tuples (or $2N$-tuples) of matrices with real spectra. For this purpose we use the following construction. If $\mathbf{X}=(\NS{X}{N})$ is a $N$-tuple of $n$ by $n$ matrices then we can always decompose $X_j$ in the form 
$X_j=X_{1j}+\mathbf{i}X_{2j}$ where the $X_{kj}$ all have real spectra, and are determined by the equations 
\begin{equation}
\left\{
\begin{array}{l}
X_{1j}=(X_j+X_j^\ast)/2,\\
X_{2j}=(X_j-X_j^\ast)/{(2\mathbf{i})},
\end{array}
\right.
\label{hermitian_partition_relations}
\end{equation}
for each $1\leq j\leq N$.

We write 
$\hat{\pi}(\mathbf{X}):=(X_{11},\ldots,X_{1N},X_{21},\ldots,X_{2N})$ and call $\hat{\pi}(\mathbf{X})$ a {\bf partition} of $\mathbf{X}$. If the $X_{kj}$ all commute we say that 
$\hat{\pi}(\mathbf{X})$ is a commuting partition, and if the $X_{kj}$ are simultaneously triangularizable $\hat{\pi}(\mathbf{X})$ is a triangularizable partition. If 
the $X_{kj}$ are all semisimple (diagonalizable) then $\hat{\pi}(\mathbf{X})$ is called a semisimple partition.

Given any matrix $X$ in $M_n$, we will write $\mathrm{Re}(X)$ and $\mathrm{Im}(X)$ to denote the hermitian matrices defined by the equations $\mathrm{Re}(X)=(X+X^\ast)/2$ and $\mathrm{Im}(X)=(X-X^\ast)/(2\mathbf{i})$. 

\begin{remark}
It is important to recall that for any $X$ in $M_n$, $X^\ast X=X X^\ast$ if and only if $\mathrm{Re}(X)\mathrm{Im}(X)=\mathrm{Im}(X)\mathrm{Re}(X)$.
\end{remark}

Given a $2m$-tuple $\mathbf{X}=(X_{11},\ldots,X_{1m},X_{21},\ldots,X_{2m})$ in 
$M_n^{2m}$, the $m$-tuple obtained by the operation 
$\upsilon(\mathbf{X}):=(X_{11}+\mathbf{i}X_{21},\ldots,X_{1m}+\mathbf{i}X_{2m})\in M_n^m$ will be called 
{\bf juncture} of $\mathbf{X}$.

We say that $N$ normal matrices $\NS{X}{N}\in M_n$ are {\em simultaneously diagonalizable} if there is a unitary matrix 
$Q\in M_n$ such that 
$Q^* X_jQ$ is diagonal for each $j=1,\ldots,N$. In this case, for $1\leq k\leq n$, let 
$\Lambda^{(k)}(X_j):=(Q^*X_jQ)_{kk}$ the $(k,k)$ element of $Q^*X_jQ$, and set 
$\Lambda^{(k)}(\NS{X}{N}):=(\Lambda^{(k)}(X_1),\ldots,\Lambda^{(k)}(X_N))$ in $\CC^N$. The set
\[
 \Lambda(\NS{X}{N}):=\{\Lambda^{(k)}(\NS{X}{N})\}_{1\leq k\leq N}
\]
is called the {\em joint spectrum of $\NS{X}{N}$ with respect to $Q$}, or just the {joint spectrum of $\NS{X}{N}$} for short, we will omit the explicit reference to $Q$ when it is clear from the context. The unitary matrix $Q$ is called a {\em joint diagonalizer} of $\NS{X}{N}$ in this document. 

Given a set $S\subseteq M_n^m$ of $m$-tuples of pairwise commuting normal matrices, we will write $\Lambda(S)$ to denote the set $\{\Lambda(X) \: |\: X\in S\}$, the set $\Lambda(S)$ will be called the joint spectra of $S$. We will write $\Lambda(X_j)$ to denote the diagonal matrix representation of the $j$-component of $\Lambda(\NS{X}{N})$, in other words 
we will have that
\[
 \Lambda(X_j)=\diag{\Lambda^{(1)}(X_j),\ldots,\Lambda^{(n)}(X_j)}.
\]

Given a $m$-tuple $\mathbf{X}=(X_1,\ldots,X_m)\in M_n^m$ of commuting normal matrices, any orthogonal projection $P\in M_n$ such that 
$X_jP=PX_j=\Lambda^{(r)}(X_j)P$ for each $1\leq j\leq m$ and some $1\leq r\leq n$, will be called a joint spectral projector of $\mathbf{X}$.

\section{Path Connectivity of Algebraic Normal Matrix Sets}
\label{matrix_sets}

\label{algebraic_matrix_sets}

\subsection{Algebraic Hermitian Matrix Sets}

For any $n\in \mathbb{Z}^+$, we will write $\mathbb{I}^m_n$ to denote the subset of $M_n^m$ determined by the 
following expression.
\begin{equation}
\mathbb{I}^m_n=\left\{(X_1,\ldots,X_m)\in M_n^m\: \left|\:\: \begin{array}{l}
X_j X_k-X_k X_j=\mathbf{0}_n,\\
X_j-X_j^\ast=\mathbf{0}_n,\\
\|X_j\|\leq 1
\end{array} 1\leq j,k\leq m\right.\right\}
\label{matrix_zero_set_definition}
\end{equation} 
the set $\mathbb{I}_n^m(p_1,\ldots,p_r)$ will be called a {\em matrix $m$-cube} in this document.

Given any $n\in \mathbb{Z}^+$ and any $r$ polynomials $p_1(x_1,\ldots,x_m),\ldots,p_r(x_1,\ldots,x_m)$ of $m$ complex variables with coefficients over $\mathbb{R}$, we will write $\mathbb{ZI}_n^m(p_1,\ldots,p_r)$ to denote the subset of $\mathbb{I}^m_n$ determined by the 
expression.
\begin{equation}
\mathbb{ZI}_n^m(p_1,\ldots,p_r)=\{(X_1,\ldots,X_m)\in \mathbb{I}^m_n\: | p_j(X_1,\ldots,X_m)=\mathbf{0}_n, 1\leq j\leq r\}
\label{matrix_zero_set_definition}
\end{equation}
the algebraic hermitian matrix set $\mathbb{ZI}_n^m(p_1,\ldots,p_r)$ will be called an {\em algebraic matrix $m$-cube} in this document.

\subsection{Algebraic Normal Matrix Sets}

For any $n\in \mathbb{Z}^+$, we will write $\mathbb{D}^m_n$ to denote the subset of $M_n^m$ determined by the 
following expression.
\begin{equation}
\mathbb{D}^m_n=\left\{(X_1,\ldots,X_m)\in M_n^m\: \left|\:\: \begin{array}{l}
X_j X_k-X_k X_j=\mathbf{0}_n,\\
X_jX_j^\ast-X_j^\ast X_j=\mathbf{0}_n,\\
\|X_j\|\leq 1
\end{array} 1\leq j,k\leq m\right.\right\}
\label{matrix_zero_set_definition}
\end{equation} 
the set $\mathbb{ZI}_n^m(p_1,\ldots,p_r)$ will be called a {\em matrix $m$-disk} in this document.

Given any $n\in \mathbb{Z}^+$ and any $r$ polynomials $p_1(z_1,\ldots,z_m),\ldots,p_r(z_1,\ldots,z_m)$ of $m$ complex variables with coefficients over $\mathbb{C}$, we will write $\mathbb{ZD}_n^m(p_1,\ldots,p_r)$ to denote the subset of $\mathbb{D}^m_n$ determined by the 
expression.
\begin{equation}
\mathbb{ZD}_n^m(p_1,\ldots,p_r)=\{(X_1,\ldots,X_m)\in \mathbb{D}^m_n\: | p_j(X_1,\ldots,X_m)=\mathbf{0}_n, 1\leq j\leq r\}
\label{matrix_zero_set_definition}
\end{equation}
the algebraic normal matrix set $\mathbb{ZD}_n^m(p_1,\ldots,p_r)$ will be called an {\em algebraic matrix $m$-disk} in this document.

\subsection{Uniform path connectivity of algebraic normal contractions}
\label{main_results} 

\begin{lemma}
\label{flat_connectivity}
Given any $2$ matrix $m$-tuples $(\NS{X}{m}),(\NS{Y}{m})\in \mathbb{I}^m_n$ such that $X_jY_k=Y_kX_j$ for each $1\leq j,k\leq m$, there is a path $\gamma\in C^1([0,1],\mathbb{I}^m_n)$ that satisfies the conditions,  
\[
\left\{
\begin{array}{l}
\gamma(0)=(\NS{X}{m}),\\
\gamma(1)=(\NS{Y}{m})
\end{array}
\right.
\]
together with the constraints, 
\[
\eth(\gamma(t),(\NS{Y}{m})) \leq \eth((\NS{X}{m}),(\NS{Y}{m}))
\]
for each $0\leq t\leq 1$.
\end{lemma}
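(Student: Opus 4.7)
The plan is to take $\gamma$ to be the straight-line (convex) path
\[
\gamma(t) := \bigl((1-t)X_1 + tY_1,\ldots,(1-t)X_m + tY_m\bigr), \qquad t\in[0,1].
\]
This is $C^\infty$ in $t$, and clearly satisfies $\gamma(0)=(X_1,\ldots,X_m)$ and $\gamma(1)=(Y_1,\ldots,Y_m)$. The bulk of the proof is then the verification that $\gamma(t)\in \mathbb{I}_n^m$ for every $t\in[0,1]$ and that the prescribed distance estimate holds.

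I would check the three defining conditions of $\mathbb{I}_n^m$ coordinatewise. Each component $(1-t)X_j+tY_j$ is a real convex combination of Hermitian matrices, hence Hermitian; by the triangle inequality and the fact that $X_j,Y_j$ are contractions, $\|(1-t)X_j+tY_j\|\leq (1-t)\|X_j\|+t\|Y_j\|\leq 1$. For pairwise commutation, the hypotheses give that $\{X_1,\ldots,X_m,Y_1,\ldots,Y_m\}$ form a mutually commuting family of Hermitian matrices (the $X$'s commute among themselves, the $Y$'s commute among themselves, and $X_jY_k=Y_kX_j$ is assumed). Expanding
\[
\bigl((1-t)X_j+tY_j\bigr)\bigl((1-t)X_k+tY_k\bigr)
\]
and using these commutations term by term shows the product is symmetric in $j,k$, so each $\gamma(t)$ is a commuting tuple.

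For the norm estimate, observe that
\[
\gamma_j(t)-Y_j = (1-t)(X_j-Y_j),
\]
hence
\[
\eth(\gamma(t),(Y_1,\ldots,Y_m)) = \max_{1\leq j\leq m}(1-t)\|X_j-Y_j\| = (1-t)\,\eth((X_1,\ldots,X_m),(Y_1,\ldots,Y_m)),
\]
which is bounded by $\eth((X_1,\ldots,X_m),(Y_1,\ldots,Y_m))$ for all $t\in[0,1]$.

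There is no real obstacle here: the hypothesis that the two tuples commute across each other is exactly what lets the affine interpolation stay inside $\mathbb{I}_n^m$. The only subtlety to flag explicitly in the write-up is that without the cross commutation $X_jY_k=Y_kX_j$, the cross terms $(1-t)t(X_jY_k-Y_kX_j)$ would obstruct commutativity of $\gamma(t)$, so that assumption is used in an essential way. Everything else is a one-line consequence of linearity, the triangle inequality, and the fact that Hermiticity is preserved by real affine combinations.
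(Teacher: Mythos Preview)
Your proof is correct and is essentially identical to the paper's: both use the straight-line path $\gamma_j(t)=(1-t)X_j+tY_j$ and derive the distance estimate from $\gamma_j(t)-Y_j=(1-t)(X_j-Y_j)$. If anything, your write-up is more explicit than the paper's, which simply asserts that ``it can be easily verified that $\gamma(t)\in\mathbb{I}^m_n$'' without spelling out the Hermiticity, contraction, and commutation checks as you do.
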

\begin{proof}
Given $(\NS{X}{m}),(\NS{Y}{m})\in \mathbb{I}^m_n$ such that $X_jY_k=Y_kX_j$ for each $1\leq j,k\leq m$, we will have that for each $1\leq j\leq m$, each matrix path of the form $\gamma_j(t)=X_j+t(Y_j-X_j)$ satisfies the interpolating conditions $\gamma_j(0)=X_j$ and $\gamma_j(1)=Y_j$, together with the constraints
\begin{equation}
\|\gamma_j(t)-Y_j\|\leq(1-t)\|X_j-Y_j\|\leq \|X_j-Y_j\|
\label{flat_path_inequality}
\end{equation} 
for each $0\leq t\leq 1$.

Let us set $\gamma(t)=(\gamma_j(t))$, $0\leq t\leq 1$, it can be easily verified that $\gamma(t)\in\mathbb{I}^m_n$ for each $0\leq t\leq 1$. By definition of $\eth$ and as a consequence of $\eqref{flat_path_inequality}$ we can derive the following estimate,
\begin{eqnarray}
\eth(\gamma(t),(\NS{Y}{m}))&=&\max_{j}\|\gamma_j(t)-Y_j\|\nonumber\\
&\leq& \max_j\|X_j-Y_j\|=\eth((\NS{X}{m}),(\NS{Y}{m}))
\label{flat_path_inequality_2}
\end{eqnarray}
for each $0\leq t\leq 1$. This completes the proof.
\end{proof}

\begin{lemma}
 \label{existence_of_a refinement}
 Given any two OPU 
 $\mathcal{P}:=\{P_1,\ldots,P_r\}$ and $\mathcal{Q}:=\{Q_1,\ldots,Q_s\}$ in $M_n$, there is an OPU $\mathcal{R}:=\{R_1,\ldots,R_t\}$ of $M_n$ such that 
 $\mathrm{span}\: \{\mathcal{P},\mathcal{Q}\}\subseteq \mathrm{span}\: {\mathcal{R}}$ and 
 $|\mathcal{R}|\leq |\mathcal{P}|\:|\mathcal{Q}|$.
\end{lemma}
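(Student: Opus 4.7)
The plan is to take the natural product refinement $R_{jk} := P_j Q_k$, discard the zeros, and show it works. The very first hurdle, however, is conceptual rather than computational: for $P_j Q_k$ to even be self-adjoint (let alone idempotent) one needs $P_j Q_k = Q_k P_j$. So before constructing anything I would observe that any OPU $\mathcal{R}$ whose span contains $\mathcal{P}$ and $\mathcal{Q}$ is automatically commutative (linear combinations of commuting projections all commute); hence the conclusion of the lemma forces $[P_j,Q_k]=\mathbf{0}_n$ for all $j,k$. This is the only non-routine point. The commutativity is supplied implicitly by the paper's setting—the $\mathcal{P},\mathcal{Q}$ arising in the sequel are joint spectral decompositions of pairwise commuting normal $m$-tuples—so I would either include commutativity as an explicit hypothesis or invoke it from context.

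Granting $P_jQ_k=Q_kP_j$, the verification proceeds in short steps. First, $R_{jk}^\ast = Q_k^\ast P_j^\ast = Q_kP_j = P_jQ_k = R_{jk}$, and
\[
R_{jk}^2 = P_jQ_kP_jQ_k = P_j^2Q_k^2 = P_jQ_k = R_{jk},
\]
so each $R_{jk}$ is an orthogonal projection. Second, pairwise orthogonality of $\mathcal{R}$ follows from
\[
R_{jk}R_{j'k'} = P_jP_{j'}Q_kQ_{k'},
\]
which vanishes whenever $(j,k)\neq(j',k')$ by orthogonality within $\mathcal{P}$ and within $\mathcal{Q}$. Third, completeness is a one-line factoring:
\[
\sum_{j,k} R_{jk} = \Bigl(\sum_j P_j\Bigr)\Bigl(\sum_k Q_k\Bigr) = \mathbf{1}_n\cdot\mathbf{1}_n = \mathbf{1}_n.
\]

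Next I would set $\mathcal{R} := \{R_{jk} : 1\le j\le r,\ 1\le k\le s,\ R_{jk}\neq \mathbf{0}_n\}$. By construction $\mathcal{R}$ is an OPU and discarding zero entries only improves the cardinality bound, giving $|\mathcal{R}|\le rs = |\mathcal{P}|\,|\mathcal{Q}|$. For the span inclusion, using $\sum_k Q_k = \mathbf{1}_n$ and $\sum_j P_j = \mathbf{1}_n$ I obtain
\[
P_j = P_j\sum_k Q_k = \sum_k R_{jk}, \qquad Q_k = \sum_j P_j Q_k = \sum_j R_{jk},
\]
so every element of $\mathcal{P}\cup\mathcal{Q}$ is a (non-negative integer) combination of elements of $\mathcal{R}$, and in particular $\mathrm{span}\{\mathcal{P},\mathcal{Q}\}\subseteq\mathrm{span}\{\mathcal{R}\}$.

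The hard part, therefore, is not the algebra but recognizing the necessity of commutativity and arguing it in the setting in which the lemma will be used; everything after that is a three-line product-of-projectors calculation. I would expect the author's proof to follow essentially the same construction, possibly folding the commutativity hypothesis silently into ``refinement.''
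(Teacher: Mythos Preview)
Your proposal is correct and follows exactly the same construction as the paper: set $R_{jk}=P_jQ_k$, discard zeros, and read off the OPU properties, the span inclusion, and the cardinality bound. The paper's proof is in fact terser than yours and, as you anticipated, silently assumes commutativity (it begins ``Since $\mathcal{P},\mathcal{Q}\subset D$'' without defining $D$), so your explicit discussion of why $[P_j,Q_k]=\mathbf{0}_n$ is needed is a genuine clarification rather than a deviation.
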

\begin{proof}
 Since $\mathcal{P},\mathcal{Q}\subset D$, by setting $R_{j,k}:=P_jQ_k$ and $\mathcal{R}:=\{R_{j,k}\}\backslash \{\mathbf{0}\}$ it can be seen that 
 $\mathcal{P},\mathcal{Q}\subseteq \mathrm{span} \:\{R_{j,k}\}$. Let us set $\mathcal{R}:=\{R_{j,k}\}$, it can 
 be seen that $|\mathcal{R}|\leq |\mathcal{P}||\mathcal{Q}|$ and 
 $\mathrm{span}\: \{\mathcal{P},\mathcal{Q}\}\subseteq \mathrm{span}\: {\mathcal{R}}$. This completes the proof.
\end{proof}

\begin{definition}{Projective Refinement.}{\rm ~Given any collection of OPU $\mathcal{P}_1=\{P_{1,j_1}\}_{j_1=1}^{r_1},$ $\ldots,\mathcal{P}_s=\{P_{s,j_s}\}_{j_s=1}^{r_s}$ such that $P_{k,j_k} P_{l,j_l}=P_{l,j_l} P_{k,j_k}$ for any $1\leq k,l \leq s$, each $P_{k,j_k}\in \mathcal{P}_k$ and each $P_{l,j_l}\in \mathcal{P}_l$. The set $\mathcal{R}(\mathcal{P}_1,\ldots,\mathcal{P}_s)$ defined by the expression
\begin{equation}
\mathcal{R}(\mathcal{P}_1,\ldots,\mathcal{P}_s)=\{P_{1,j_1}P_{2,j_2}\cdots P_{s,j_s}|P_{k,j_k}\in \mathcal{P}_k\}\backslash \{\mathbf{0}\},
\label{refinement_definition}
\end{equation}
will be called a projective refinement of $\mathcal{P}_1,\ldots,\mathcal{P}_s$.}
\end{definition}

By iterating on Lemma \ref{existence_of_a refinement} we can obtain the following corollary.

\begin{corollary}\label{existence_of_refined_OPU}
For any collection of simultaneously commuting OPU $\mathcal{P}_1,\ldots,\mathcal{P}_s$, we will have that $\mathcal{R}(\mathcal{P}_1,\ldots,\mathcal{P}_s)$ is an OPU.
\end{corollary}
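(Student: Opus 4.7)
The plan is to proceed by induction on $s$, bootstrapping from Lemma~\ref{existence_of_a refinement}, which is the $s=2$ case of essentially what we want to prove.

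For the base case $s=1$, the statement reduces to saying that $\mathcal{P}_1$ is itself an OPU, which is true by hypothesis. For $s=2$, Lemma~\ref{existence_of_a refinement} directly produces an OPU $\mathcal{R}$ with $R_{j,k}=P_{1,j}P_{2,k}$, which after discarding zeros matches the definition \eqref{refinement_definition}, so $\mathcal{R}(\mathcal{P}_1,\mathcal{P}_2)$ is an OPU.

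For the inductive step, assume $\mathcal{Q}:=\mathcal{R}(\mathcal{P}_1,\ldots,\mathcal{P}_{s-1})$ is an OPU. I would then verify two points before invoking Lemma~\ref{existence_of_a refinement} once more on the pair $(\mathcal{Q},\mathcal{P}_s)$. First, each element of $\mathcal{Q}$ has the form $Q=P_{1,j_1}P_{2,j_2}\cdots P_{s-1,j_{s-1}}$, and by the commutativity hypothesis every $P_{k,j_k}$ commutes with every $P_{s,j_s}\in\mathcal{P}_s$; hence $Q$ commutes with each $P_{s,j_s}$, so the product $Q P_{s,j_s}$ is again a self-adjoint idempotent. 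Second, by associativity of matrix multiplication we have the set identity
\[
\mathcal{R}(\mathcal{Q},\mathcal{P}_s)=\{Q P_{s,j_s}\mid Q\in\mathcal{Q},\,P_{s,j_s}\in\mathcal{P}_s\}\backslash\{\mathbf{0}\}=\mathcal{R}(\mathcal{P}_1,\ldots,\mathcal{P}_s),
\]
because adjoining $P_{s,j_s}$ to an $(s-1)$-fold product produces precisely the $s$-fold products appearing in \eqref{refinement_definition}, and the nonzero products obtained this way coincide on both sides (a zero intermediate product $Q=\mathbf{0}$ contributes only further zeros and is discarded either way). Having established commutativity of the pair $(\mathcal{Q},\mathcal{P}_s)$, Lemma~\ref{existence_of_a refinement} applies and shows that $\mathcal{R}(\mathcal{Q},\mathcal{P}_s)$ is an OPU, completing the induction.

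The only mildly delicate point, and thus the main obstacle I anticipate, is the bookkeeping around zero products: one must be careful that deleting zero elements at each stage of the iterative refinement does not change the resulting set compared with deleting zeros only once at the end. This is resolved by observing that if $P_{1,j_1}\cdots P_{k,j_k}=\mathbf{0}$ for some intermediate $k<s$, then every extension of this product by further factors is also $\mathbf{0}$, so the two cleanup conventions produce the same collection. Everything else (self-adjointness, idempotence, pairwise orthogonality, summation to $\mathbf{1}_n$) is inherited directly from the two-OPU case via the induction.
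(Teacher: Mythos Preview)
Your proposal is correct and follows essentially the same approach as the paper: the paper's entire proof is the single sentence ``By iterating on Lemma~\ref{existence_of_a refinement} we can obtain the following corollary,'' and your induction on $s$ is precisely that iteration spelled out in detail, including the bookkeeping on zero products that the paper leaves implicit.
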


\begin{lemma}{{\rm Projective Polar Decomposition.}} Given an orthogonal matrix partition of unity $\mathcal{P}=\{P_j\}_{j=1}^r$ in $M_n$, and give any matrix $X\in M_n$, there is a polar decomposition $X_{j,j}=V_j R_j$ of the matrix $X_{j,j}=P_j X P_j$ that satisfies the conditions $V_jV_j^*=V_j^*V_j=P_j$, $R_j\geq \mathbf{0}_n$, 
$P_jV_j=V_jP_j=V_j$, $P_jR_j=R_jP_j=R_j$ and $P_kV_j=P_kR_j=R_jP_k=V_jP_k=\mathbf{0}_n$, for $1\leq k,j\leq r$ with $k\neq j$.
\end{lemma}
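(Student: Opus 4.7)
The plan is to reduce the problem to a standard polar decomposition on the finite-dimensional Hilbert subspace $H_j := \mathrm{range}(P_j) \subseteq \mathbb{C}^n$. Since $\mathcal{P}$ is an OPU, $\mathbb{C}^n = \bigoplus_{k=1}^r H_k$ is an orthogonal direct sum, and the identity $P_j X P_j = X_{j,j}$ immediately yields the sandwich relations
\[
P_j X_{j,j} = X_{j,j} P_j = X_{j,j}, \qquad P_k X_{j,j} = X_{j,j} P_k = \mathbf{0}_n \quad (k\neq j),
\]
so $X_{j,j}$ restricts to a linear endomorphism of $H_j$ and acts as zero on the complementary $H_k$'s.

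Next, I would apply the classical finite-dimensional polar decomposition inside $H_j$. Concretely, set $R_j := (X_{j,j}^{\ast} X_{j,j})^{1/2}$, which is positive semidefinite, satisfies $P_j R_j = R_j P_j = R_j$, and vanishes on each $H_k$ with $k\neq j$. Standard arguments furnish a partial isometry $W$ with $X_{j,j} = W R_j$, initial space $(\ker R_j)^{\perp}\cap H_j$ and final space $\mathrm{range}(X_{j,j}) \subseteq H_j$. The remaining task is to upgrade $W$ to a partial isometry $V_j$ on the ambient space whose initial and final projections both equal $P_j$.

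The core step, and the only place requiring care, is extending $W$ to a unitary on $H_j$. By the rank--nullity theorem applied to $X_{j,j}$ viewed as an operator on $H_j$, the subspaces $\ker X_{j,j}\cap H_j$ and $\mathrm{range}(X_{j,j})^{\perp}\cap H_j$ have equal dimension; I would fix any unitary bijection between them and define $V_j$ to agree with $W$ on $(\ker R_j)^{\perp}\cap H_j$, with the chosen bijection on $\ker R_j \cap H_j$, and with $\mathbf{0}_n$ on $H_j^{\perp} = \bigoplus_{k\neq j} H_k$. Since the extension happens on $\ker R_j$, the product $V_j R_j$ is unchanged and still equals $X_{j,j}$. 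The computation $V_j V_j^{\ast} = V_j^{\ast} V_j = P_j$ follows directly because $V_j$ restricts to a unitary on $H_j$ and kills $H_j^{\perp}$.

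Finally, I would verify the orthogonality constraints as direct consequences of the support properties of $V_j$ and $R_j$: for $k\neq j$, the identity $P_k V_j = \mathbf{0}_n$ holds because $\mathrm{range}(V_j)\subseteq H_j \perp H_k$, while $V_j P_k = \mathbf{0}_n$ because $\mathrm{range}(P_k) = H_k \subseteq \ker V_j$, and the analogous statements for $R_j$ follow identically; the sandwich relations $P_j V_j = V_j P_j = V_j$ and $P_j R_j = R_j P_j = R_j$ are immediate from the construction. The main conceptual obstacle is ensuring that the partial isometry coming from the raw polar decomposition can be promoted to a full unitary on $H_j$ without disturbing $V_j R_j = X_{j,j}$; in finite dimensions this is handled by the rank--nullity dimension count above.
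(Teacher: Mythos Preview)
Your argument is correct and follows essentially the same strategy as the paper: both reduce to performing an ordinary polar decomposition on the compression of $X$ to $\mathrm{range}(P_j)$ and then padding by zero on the orthogonal complement. The only cosmetic difference is that the paper carries out this reduction via an explicit unitary change of basis sending $P_j$ to $\mathbf{1}_{m_j}\oplus\mathbf{0}_{n-m_j}$ and invokes the standard polar decomposition in $M_{m_j}$, whereas you work coordinate-free on the subspace $H_j$ and make the extension from partial isometry to block-unitary explicit via rank--nullity.
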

\begin{proof}
By changing basis if necessary, we can assume that the elements of $\mathcal{P}$ are diagonal matrices. We will have that 
for each $j=1,\ldots,r$ there is a unitary (permutation) matrix $S_j\in M_n$, such that $S_jP_jS_j^\ast=\hat{P}_j$ with $\hat{P}_j=\I_{m_j}\oplus\mathbf{0}_{n-m_j}$, for some $1\leq m_j\leq n$. If we set $\psi_j:=\mathrm{Ad}[S_j]$, then each $\psi_j$ is a $\ast$-homomorphism. We will have that for each $j$:
\begin{eqnarray}
\hat{P}_jS_j^\ast X S_j\hat{P}_j=\hat{X}_{j,j}\hat{P}_{j}=\hat{X}_{j,j}
\label{projective_decomposition_1}
\end{eqnarray}
with $\hat{X}_{j,j}=\tilde{X}_{j,j}\oplus \mathbf{0}_{n-m_j}$ for some $\tilde{X}_{j,j}\in M_{m_j}$. Let $\tilde{X}_{j,j}=\tilde{V}_{j,j}\tilde{R}_{j,j}$ be the polar decomposition of 
$\tilde{X}_{j,j}$. Then we have that the matrices $\hat{V}_{j,j}=\tilde{V}_{j,j}\oplus \mathbf{0}_{n-m_j}$ and $\hat{R}_{j,j}=\tilde{R}_{j,j}\oplus \mathbf{0}_{n-m_j}$ satisfy the equations:
\begin{eqnarray}
\left\{
\begin{array}{l}
\hat{X}_{j,j}=\hat{V}_{j,j}\hat{R}_{j,j}\\
\hat{V}_j\hat{V}_j^*=\hat{V}_j^*\hat{V}_j=\hat{P}_j\\
\hat{R}_j\geq \mathbf{0}_n\\
\hat{P}_j\hat{V}_j=\hat{V}_j\hat{P}_j=\hat{V}_j\\
\hat{P}_j\hat{R}_j=\hat{R}_j\hat{P}_j=\hat{R}_j\\
\hat{P}_k\hat{V}_j=\hat{P}_k\hat{R}_j=\hat{R}_j\hat{P}_k=\hat{V}_j\hat{P}_k=\mathbf{0}_n
\end{array}
1\leq k,j\leq r, k\neq j.
\right. 
\label{commutativity_relations}
\end{eqnarray}
We can use $\psi_j$ together with \eqref{commutativity_relations} to \eqref{projective_decomposition_1} in order to obtain the following decomposition:
\begin{eqnarray}
X_{j,j}&=&P_jXP_j\\
&=&\psi_j(\hat{P}_j) X \psi(\hat{P}_j)\\
&=&\psi_j(\hat{P}_jS_j^\ast X S_j\hat{P}_j)\\
&=&\psi_j(\hat{X}_{j,j}\hat{P}_{j})\\
&=&\psi_j(\hat{V}_{j,j}\hat{R}_{j,j}\hat{P}_{j})\\
&=&\psi_j(\hat{V}_{j,j}\hat{R}_{j,j})\\
&=&\psi_j(\hat{V}_{j,j})\psi_j(\hat{R}_{j,j})
\label{generalized_block_decomposition}
\end{eqnarray}
Let us set $V_j=\psi_j(\hat{V}_j)$ and $R_j=\psi_j(\hat{R}_j)$. Since each $\psi_j$ preserves commutativity and positivity, by \eqref{commutativity_relations} we have that 
$V_jV_j^*=V_j^*V_j=P_j$, $R_j\geq \mathbf{0}_n$, 
$P_jV_j=V_jP_j=V_j$, $P_jR_j=R_jP_j=R_j$ for each $1\leq j\leq r$. By the previous commutativity relations we have that $V_j=P_jV_jP_j$ and $R_j=P_jR_jP_j$ for each 
$j$, then  $P_kV_j=P_kR_j=R_jP_k=V_jP_k=\mathbf{0}_n$ for $1\leq k,j\leq r$ with $k\neq j$. This completes the proof.
\end{proof}

We can generalize the proof of \cite[P.VI.6.6]{Bhatia_mat_book} to obtain the following lemma.

\begin{lemma}
\label{existence_of_almost_unit}
 Given a unitary $W$ and a normal contraction $D$ in $M_n$ for $n\geq 2$, if $D=\sum_{j=1}^r \alpha_jP_j$ is diagonal for $2\leq r\in \ZZ$ and $\NS{\alpha}{r}\in \disk$, the 
 set $\{P_j\}$ consists of pairwise orthogonal diagonal projections in $M_n$ such that $\sum_{j}P_j=\I_n$, and $\alpha_j\neq \alpha_k$ whenever $k\neq j$, then there 
 is a unitary matrix $Z\in M_n$ and a constant $C$ depending on $r$ and $\sigma(D)$ such that $[Z,D]=0$ and $\|\I_n-WZ\|\leq C\|WDW^*-D\|$.
\end{lemma}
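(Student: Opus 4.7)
The plan is to adapt the argument of Bhatia (Proposition VI.6.6) by decomposing $W$ relative to the spectral projectors $\{P_j\}_{j=1}^r$ of $D$, correcting the diagonal blocks via the Projective Polar Decomposition lemma, and assembling a unitary $Z$ that lies in the commutant of $D$. Setting $W_{jk}:=P_j W P_k$, one has $W=\sum_{j,k}W_{jk}$. Since $W$ is unitary, $(WDW^{*}-D)W=WD-DW$, so $\|WD-DW\|=\|WDW^{*}-D\|$. Sandwiching between the spectral projectors of $D$ yields
\[
P_j(WD-DW)P_k=(\alpha_k-\alpha_j)\,W_{jk},
\]
and therefore, if $\delta:=\min_{j\neq k}|\alpha_j-\alpha_k|$ (which depends only on $\sigma(D)$), then $\|W_{jk}\|\leq \delta^{-1}\|WDW^{*}-D\|$ for every $j\neq k$.

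Next I apply the Projective Polar Decomposition lemma to $X=W$ with the OPU $\{P_j\}$, obtaining decompositions $W_{jj}=V_j R_j$ with $V_j V_j^{*}=V_j^{*}V_j=P_j$, $R_j\geq \mathbf{0}_n$, and the cross-vanishing relations $P_k V_j=V_j P_k=P_k R_j=R_j P_k=\mathbf{0}_n$ for $k\neq j$. Define $Z:=\sum_j V_j^{*}$. The identities $V_j=P_j V_j P_j$ together with orthogonality of the $\{P_j\}$ give $Z^{*}Z=ZZ^{*}=\sum_j P_j=\mathbf{1}_n$, so $Z$ is unitary; moreover $DZ=\sum_j \alpha_j V_j^{*}=ZD$, so $[Z,D]=\mathbf{0}_n$.

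For the norm estimate, expanding $WZ=\sum_{j,k}W_{kj}V_j^{*}$ and using $\sum_j P_j=\mathbf{1}_n$ one obtains
\[
\mathbf{1}_n-WZ=\sum_{j=1}^{r}(P_j-W_{jj}V_j^{*})\;-\;\sum_{j\neq k}W_{kj}V_j^{*}.
\]
On the diagonal, $W_{jj}V_j^{*}=V_j R_j V_j^{*}$ gives $\|P_j-W_{jj}V_j^{*}\|=\|V_j(P_j-R_j)V_j^{*}\|\leq\|P_j-R_j\|$. Because $0\leq R_j\leq P_j$ and $\|R_j\|\leq 1$, the factorization $P_j-R_j^{2}=(P_j-R_j)(P_j+R_j)$ yields $\|P_j-R_j\|\leq\|P_j-R_j^{2}\|=\|P_j-W_{jj}^{*}W_{jj}\|$; using $W^{*}W=\mathbf{1}_n$ this last quantity equals $\|\sum_{k\neq j}W_{kj}^{*}W_{kj}\|\leq (r-1)\delta^{-2}\|WDW^{*}-D\|^{2}$. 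Combining with the off-diagonal estimate $\|W_{kj}V_j^{*}\|\leq\|W_{kj}\|\leq \delta^{-1}\|WDW^{*}-D\|$ and applying the triangle inequality yields
\[
\|\mathbf{1}_n-WZ\|\leq r(r-1)\delta^{-2}\|WDW^{*}-D\|^{2}+r(r-1)\delta^{-1}\|WDW^{*}-D\|.
\]
Since $\|WDW^{*}-D\|\leq 2\|D\|\leq 2$, the quadratic term folds into a single linear bound with a constant $C$ depending only on $r$ and $\delta$, hence only on $r$ and $\sigma(D)$.

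The main obstacle will be ensuring that each $V_j$ is genuinely unitary on $\mathrm{range}(P_j)$ rather than merely a partial isometry, which requires $W_{jj}$ to be invertible on that subspace. My plan is to observe that the identity $P_j-W_{jj}^{*}W_{jj}=\sum_{k\neq j}W_{kj}^{*}W_{kj}$ forces invertibility of $W_{jj}$ on $\mathrm{range}(P_j)$ once $\|WDW^{*}-D\|$ lies below a threshold depending on $r$ and $\delta$; above that threshold the asserted inequality is trivial for any $C\geq 2$, so $C$ can be chosen uniformly.
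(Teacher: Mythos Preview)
Your proof is correct and follows the same route as the paper's---same block decomposition $W_{jk}=P_jWP_k$, same off-diagonal bound $\|W_{jk}\|\leq\delta^{-1}\|WDW^{*}-D\|$, same projective polar decomposition of the diagonal blocks, and the same $Z=\sum_j V_j^{*}$; the paper merely organizes the final estimate through the intermediate point $X=\sum_j W_{jj}$ via $\|Z^{*}-W\|\leq\|Z^{*}-X\|+\|X-W\|\leq\|X^{*}X-W^{*}W\|+\|X-W\|\leq 3\|X-W\|$, which yields the cleaner constant $C=3r(r-1)/\delta$ and avoids your $\delta^{-2}$ term. Your closing concern about invertibility is unnecessary: the Projective Polar Decomposition lemma, as stated in the paper, already delivers $V_jV_j^{*}=V_j^{*}V_j=P_j$ for arbitrary $W_{jj}$, because every square matrix admits a polar decomposition with a genuinely unitary (not merely partial-isometric) factor.
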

\begin{proof}
 Since there are $r$ mutually orthogonal 
 projections $\mathbf{0}_n\leq P_1,\ldots,P_r\leq \I_n$ in $M_n$ such that $\sum_{j}P_j=\I_n$ and 
 $D:=\sum_{j}\alpha_jP_j$ with $\alpha_j\in \disk$. By setting 
 $W_{j,k}:=P_jWP_k$, we will have that $W$ has a decomposition $W=\sum_{j,k}W_{j,k}$ and it can be seen that 
\begin{eqnarray}
 \|WDW^*-D\|&=&\|WD-DW\|\\
            &=&\|\sum_{j,k}(\alpha_j P_jW_{j,k}-\alpha_k W_{j,k}P_k\|\\
            &=&\|\sum_{j,k}(\alpha_j-\alpha_k)W_{j,k}\|.
\end{eqnarray}
Hence, for $j\neq k$,
\begin{eqnarray}
 \|W_{j,k}\|&\leq& \frac{1}{|\alpha_j-\alpha_k|}\|WDW^*-D\|\\
            &\leq& \max_{j,k}\left\{\frac{1}{|\alpha_j-\alpha_k|}\right\}\|WDW^*-D\|.
\end{eqnarray}
Hence, by setting $s=\min_{j,k,j\neq k}|\alpha_j-\alpha_k|$ we will have that
\[
 \left\|W-\sum_{j}W_{j,j}\right\|\leq \frac{r(r-1)}{s}\|WDW^*-D\|.
\]
Let $X:=\sum_{j}W_{j,j}=\sum_{j}P_jWP_j$. Hence $\|X\|\leq \|W\|=1$. Let 
$W_{j,j}:=V_jR_j$ be the polar decomposition of $W_{j,j}$, with $V_jV_j^*=V_j^*V_j=P_j$, $R_j\geq \mathbf{0}_n$, 
$P_jV_j=V_jP_j=V_j$, $P_jR_j=R_jP_j=R_j$ and $P_kV_j=P_kR_j=R_jP_k=V_jP_k=\mathbf{0}_n$, if $k\neq j$. Then
\[
 \|W_{j,j}-V_j\|=\|R_j-P_j\|\leq \|R_j^2-P_j\|,
\]
since $R_j$ is a contraction. Let $V:=\sum_{j}V_j$. Then $V\in \U{n}$ and from the above inequality, we see that 
\[
 \|X-V\|\leq \|X^*X-\I_n\|=\|X^*X-W^*W\|.
\]
Hence,
\begin{eqnarray}
\|V-W\|&\leq&\|V-X\|+\|X-W\|\leq\|W-X\|+\|X^*X-W^*W\|\\
          &\leq&\|W-X\|+\|(X^*-W^*)X\|+\|W^*(X-W)\|\\
          &\leq&3\|W-X\|\leq \frac{3r(r-1)}{s}\|WDW^*-D\|.       
\end{eqnarray}
By setting $Z:=V^*$ and $C:=\frac{3r(r-1)}{s}$, it can be seen that $\|\I_n-WZ\|=\|V-W\|\leq C\|WDW^*-D\|$ and 
also that $[Z,D]=[V,D]=0$. This completes the proof.
\end{proof}

\begin{remark}\label{finite_order_remark}
 Given any normal contraction $D$ such that $p(D)=\mathbf{0}_n$ for some $p\in \CC[z]$ with $\deg(p)\leq r$, we will have that there are an integer $r'\leq r$, $r'$ complex numbers $\NS{\alpha}{{r'}}\in \disk$, and $r'$ pairwise orthogonal projections 
 $P_1,\ldots,P_{r'}$ such that, $p(\alpha_j)=0$, $\sum_jP_j=\I_n$ and $D=\sum_{j}\alpha_jP_j$. 
\end{remark}

\begin{lemma}
\label{existence_of_refined_almost_unit}
 Given a unitary $W$ and a collection normal contractions $D_1,\ldots,D_m$ in $M_n$ for $n\geq 2$, if each $D_k=\sum_{j=1}^{r_k} \alpha_{k,j}P_{k,j}$ is diagonal for $2\leq r_k\in \ZZ$ and $\{\alpha_{k,j}\}\subseteq \disk$, each 
 set $\{P_{k,j}\}$ consists of pairwise orthogonal diagonal projections in $M_n$ such that $\sum_{j}P_{k,j}=\I_n$, and $\alpha_{k,j}\neq \alpha_{k,j}$ whenever $l\neq j$, then there 
 is a unitary matrix $Z\in M_n$ and a constant $C$ depending on $m,r_1,\ldots,r_m$ and the spectra $\sigma(D_1),\ldots,\sigma(D_m)$ such that $[Z,D_k]=0$ and $\|\I_n-WZ\|\leq C\max_{1\leq k\leq m}\|WD_kW^*-D_k\|$.
\end{lemma}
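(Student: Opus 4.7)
The plan is to reduce the multi-matrix case to the single-matrix argument of Lemma \ref{existence_of_almost_unit} by replacing the spectral OPU $\{P_j\}$ with a common projective refinement of $\{P_{1,j}\}_{j}, \ldots, \{P_{m,j}\}_{j}$. Since each $D_k$ is diagonal, all the $P_{k,j}$ pairwise commute, so by Corollary \ref{existence_of_refined_OPU} the collection $\mathcal{R} = \mathcal{R}(\{P_{1,j}\},\ldots,\{P_{m,j}\}) = \{R_1,\ldots,R_t\}$ is an OPU with $t \le r_1 r_2 \cdots r_m$. Every $P_{k,j}$ is a sum of certain $R_I$, so any operator that commutes with all $R_I$ automatically commutes with all $D_k$.

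Next I would generalize the off-diagonal estimate from the single-matrix case. Expand $W = \sum_{I,I'} R_I W R_{I'}$. For $I \ne I'$, the defining products of $R_I$ and $R_{I'}$ must disagree in at least one factor, say $R_I = R_I P_{k,a}$ and $R_{I'} = P_{k,b} R_{I'}$ with $a \ne b$. Using the identity
\[
W D_k - D_k W = \sum_{a,b} (\alpha_{k,b}-\alpha_{k,a})\, P_{k,a} W P_{k,b},
\]
and the fact that $\{P_{k,j}\}_j$ is an OPU, we get
\[
\|P_{k,a} W P_{k,b}\| \le \frac{1}{|\alpha_{k,a}-\alpha_{k,b}|}\, \|WD_kW^* - D_k\|,
\]
hence $\|R_I W R_{I'}\| \le \|P_{k,a}WP_{k,b}\| \le s^{-1}\,\max_{1\leq k\leq m}\|WD_kW^*-D_k\|$, where $s := \min_{k} \min_{a\ne b} |\alpha_{k,a}-\alpha_{k,b}| > 0$. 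Summing the at most $t(t-1)$ off-diagonal blocks,
\[
\bigl\|W - \sum_I R_I W R_I\bigr\| \le \frac{t(t-1)}{s}\, \max_k \|WD_kW^*-D_k\|.
\]

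From this point the argument mirrors Lemma \ref{existence_of_almost_unit}. Apply the Projective Polar Decomposition Lemma to $X := \sum_I R_I W R_I$ with respect to the OPU $\mathcal{R}$, obtaining $R_I W R_I = V_I T_I$ with $V_I V_I^* = V_I^* V_I = R_I$, $T_I \ge \mathbf{0}_n$, and all the orthogonality relations between the $V_I, T_I, R_I$. Define $V := \sum_I V_I$; these relations force $V \in \U{n}$, and by construction $[V, R_I] = 0$ for every $I$, hence $[V, D_k] = 0$ for every $k$. Contractivity of $W$ then yields $\|T_I - R_I\| \le \|T_I^2 - R_I\|$, so
\[
\|X - V\| \le \|X^*X - \mathbf{1}_n\| = \|X^*X - W^*W\| \le 2\|X - W\|,
\]
and by the triangle inequality $\|V - W\| \le 3\|W - X\|$. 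Setting $Z := V^*$ gives $[Z, D_k] = 0$ and
\[
\|\mathbf{1}_n - WZ\| = \|V - W\| \le \frac{3\,t(t-1)}{s}\, \max_{1\le k\le m}\|WD_kW^* - D_k\|,
\]
with the constant $C = 3t(t-1)/s$ depending only on $m$, the $r_k$ and the spectra $\sigma(D_k)$.

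The main obstacle I anticipate is the off-diagonal estimate for the refinement projectors: one must check that whenever $R_I \ne R_{I'}$ there is at least one spectral index in which they genuinely differ, so that the bound on $\|P_{k,a} W P_{k,b}\|$ really applies. This is immediate from the definition of the projective refinement, but it is the step that ties the multi-matrix commutator bounds back to the single-matrix template.
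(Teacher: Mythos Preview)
Your argument is correct, but it is genuinely different from the paper's route. The paper does \emph{not} re-run the block/polar-decomposition argument of Lemma~\ref{existence_of_almost_unit} on the refinement $\mathcal{R}$. Instead it first applies Lemma~\ref{existence_of_almost_unit} separately to each $D_k$, obtaining unitaries $Z_k$ with $[Z_k,D_k]=0$ and $\|W-Z_k\|\le C_k\|WD_kW^*-D_k\|$; from these it extracts $\|WP_{k,j}W^*-P_{k,j}\|\le 2C_k\|WD_kW^*-D_k\|$, and then for each $P\in\mathcal{R}$ combines these into a bound on $\|WPW^*-P\|$. At that point the paper invokes an \emph{external} result, \cite[Lemma~2.5.1]{Amenable_algebras_Lin}, which (under a smallness hypothesis on $\nu=\max_k\|WD_kW^*-D_k\|$) produces a unitary $W_P$ with $W_P W P (W_PW)^*=P$; it then glues the $W_PW$ along $\mathcal{R}$ to build $Z$.

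Your approach is more self-contained and arguably cleaner: you bypass Lin's lemma entirely by observing that for $R_I\neq R_{I'}$ there is some $k$ with $R_I\le P_{k,a}$, $R_{I'}\le P_{k,b}$, $a\neq b$, so $\|R_IWR_{I'}\|\le\|P_{k,a}WP_{k,b}\|$ is controlled exactly as in the single-matrix lemma. This gives a uniform constant $C=3t(t-1)/s$ with no smallness assumption on $\nu$, whereas the paper's argument only literally works when $\nu<1/(2m\max_kC_k)$ (the complementary case being handled trivially since then $C\nu\ge 2\ge\|\I_n-WZ\|$ for any $Z$). The trade-off is that the paper's method would generalize more readily to situations where one has a black-box ``close projections are unitarily equivalent by a near-identity unitary'' lemma but no good control on individual off-diagonal blocks; your method is tighter in the present finite-spectrum setting.
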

\begin{proof}
We can apply Lemma \ref{existence_of_almost_unit} to $W$ and each $D_k$ to obtain for each $k$ a unitary matrix $Z_k$ that satisfies the conditions 
\begin{eqnarray}
\left\{
\begin{array}{l}
[Z_k,D_k]=\mathbf{0}_n\\
\|\I_n-WZ\|\leq C_k\|WDW^*-D\|
\end{array}
\right.
\label{approx_conditions_1}
\end{eqnarray}
where $C_k$ is a constant that depends on $r$ and $\sigma(D_k)$. By \eqref{approx_conditions_1} we will have that for each $P_{k,j}$
\begin{eqnarray}
\|WP_{k,j}W^\ast-P_{k,j}\|&=&\|WP_{k,j}-P_{k,j}W\|\nonumber\\
&\leq&\|WP_{k,j}-Z_kP_{k,j}\|+\|P_{k,j}Z_k-P_{k,j}W\|\nonumber\\
&\leq&2\|W-Z_k\|\nonumber\\
&\leq&2C_k\|WD_kW^\ast-D_k\|
\label{first_projective_inequality}
\end{eqnarray}
Let us consider a fixed but arbitrary element $P$ in the projective refinement $\mathcal{R}(\{P_{1,j_1}\},\\
\ldots,\{P_{m,j_m}\})$. We will have that $P=P_{1,j'_1},\ldots,P_{m,j'_m}$ with $P_{k,j'_k}\in \{P_{k,j_k}\}$ for each $1\leq k\leq m$. This implies that 
\begin{eqnarray}
\|WPW^\ast-P\|&=&\|WP_{1,j'_1}\cdots P_{m,j'_m}W^\ast-P_{1,j'_1}\cdots P_{m,j'_m}\|\nonumber\\
&\leq&\sum_{k=1}^m\|WP_{k,j'_k}W^\ast-P_{k,j'_k}\|
\label{refined_projective_inequality}
\end{eqnarray}
Combining \eqref{first_projective_inequality} and \eqref{refined_projective_inequality} we obtain the following estimate.
\begin{eqnarray}
\|WPW^\ast-P\|&\leq&2m\max_{1\leq k\leq m}C_k\max_{1\leq k\leq m}\|WD_kW^\ast-D_k\|
\label{second_refined_projective_inequality}
\end{eqnarray}
Let us set $\nu=\max_{1\leq k\leq m}\|WD_kW^\ast-D_k\|$. If $\nu<1/(2m\max_{1\leq k\leq m}C_k)$, then by \cite[Lemma 2.5.1]{Amenable_algebras_Lin} we have that \eqref{second_refined_projective_inequality} implies that for each $P$ in the projective refinement $\mathcal{R}(\{P_{1,j_1}\},\ldots,\{P_{m,j_m}\})$ there is a unitary $W_P\in M_n$ such that.
\begin{equation}
\left\{
\begin{array}{l}
WPW^\ast=W_P^\ast P W_P\\
\|\mathbf{1}_n-W_P\|\leq \sqrt{2}\|WPW^\ast-P\|
\end{array}
\right.
\label{first_projective_commutation_relation}
\end{equation}

We have that the matrix $Z_P=W_PW\in M_n$ is a unitary that satisfies the following commutation relation.
\begin{equation}
Z_P P=P Z_P
\label{second_projective_commutation_relation}
\end{equation}
Let us list $\mathcal{R}(\{P_{1,j_1}\},\ldots,\{P_{m,j_m}\})$ in the form $\mathcal{R}(\{P_{1,j_1}\},\ldots,\{P_{m,j_m}\})=\{P_1,\ldots,P_{N}\}$. By \eqref{first_projective_commutation_relation} and \eqref{second_refined_projective_inequality} we have that for each $P_j\in \mathcal{R}(\{P_{1,j_1}\},\ldots,\{P_{m,j_m}\})$, there is a unitary $Z_j=W_{P_j}W\in M_n$ such that
\begin{equation}
\left\{
\begin{array}{l}
Z_jP_j=P_jZ_j\\
\|W-Z_j\|=\|\mathbf{1}_n-W_{P_j}\|\leq \sqrt{2}\|WP_jW^\ast-P_j\|
\end{array}
\right.
\label{third_projective_commutation_relation}
\end{equation}
As a consequence of \eqref{third_projective_commutation_relation} and Corollary \ref{existence_of_refined_OPU} it can 
be easily verified that $\hat{Z}=\sum_{j=1}^N Z_j P_j$ is a unitary. Moreover, by \eqref{second_refined_projective_inequality} we can obtain the following estimate.
\begin{eqnarray}
\|W-\hat{Z}\|&=&\left\|(W-\hat{Z})\left(\sum_{j=1}^N P_j\right)\right\|\nonumber \\
       &=&\left\|\sum_{j=1}^N (WP_j-Z_jP_j)\right\|\nonumber \\
       &\leq&\sum_{j=1}^N \|(\mathbf{1}_n-W_{P_j})WP_j\|\\
       &\leq&\sum_{j=1}^N \|\mathbf{1}_n-W_{P_j}\|\nonumber\\
       &\leq& 2\sqrt{2}mN \max_{1\leq k\leq m} C_k \nu\nonumber
\label{fourth_projective_inequality}
\end{eqnarray}
Let us set 
\begin{equation}
C=2\sqrt{2}mN \max_{1\leq k\leq m} C_k=6\sqrt{2}mN \frac{\max_{1\leq k\leq m}r_k(r_k-1)}{\min_{1\leq k\leq m} \min_{1\leq j,l\leq r_k, j\neq l} |\alpha_{k,j}-\alpha_{k,l}|}
\label{Universal_constant_definition}
\end{equation}
and 
\begin{equation}
Z=\hat{Z}^\ast.
\label{Joint_commuting_unitary_approximation}
\end{equation}
We will have that $ZD_k=D_kZ$, and 
\[
\|\I_n-ZW\|=\|\hat{Z}-W\|\leq C\max_{1\leq k\leq m}\|WD_kW^*-D_k\|.
\]
This completes the proof.
\end{proof}

The following result was proved in \cite{Vides_homotopies}.

\begin{lemma}[Existence of isospectral approximants]
\label{Joint_spectral_variation_inequality_2}
 Given $\varepsilon>0$ there is $\delta> 0$ such that, for any $2$ families of $m$ pairwise commuting normal 
 matrices $\NS{X}{m}$ and $\NS{Y}{m}$ 
 which satisfy the constraints $\|X_j-Y_j\|\leq \delta$ for each $1\leq j\leq N$, there is a constant $K_m$ and a unitary $W\in \U{n}$ such that the inner $\ast$-homomorphism $\Psi=\mathrm{Ad}[W]$ satisfies the conditions: $\sigma(\Psi(X_j))=\sigma(X_j)$, $[\Psi(X_j),Y_j]=0$ and $\max\{\|\Psi(X_j)-Y_j\|,\|\Psi(X_j)-X_j\|\}\leq K_m\delta$, for each $1\leq j\leq N$.
\end{lemma}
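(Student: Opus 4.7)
The plan is to reduce the problem to a joint eigenvalue matching problem and then implement the matching by composing joint unitary diagonalizers of the two commuting families. Since the required $\ast$-homomorphism $\Psi$ is inner, the isospectrality condition $\sigma(\Psi(X_j))=\sigma(X_j)$ will be automatic; the nontrivial content is in simultaneously enforcing commutativity with every $Y_j$ and controlling the two norm estimates.

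First I would simultaneously unitarily diagonalize each commuting $m$-tuple: pick $Q_X,Q_Y\in\U{n}$ with $Q_X^*X_jQ_X=\Lambda_j$ and $Q_Y^*Y_jQ_Y=\Delta_j$ all diagonal. Reading off the joint eigenvalues as $\lambda^{(i)}=((\Lambda_1)_{ii},\ldots,(\Lambda_m)_{ii})\in\CC^m$ and $\mu^{(i)}=((\Delta_1)_{ii},\ldots,(\Delta_m)_{ii})\in\CC^m$, the essential lemma to invoke is a joint spectral variation inequality: the hypothesis $\|X_j-Y_j\|\le\delta$ forces the existence of a permutation $\pi\in\sym{n}$ and a constant $K_m$ (depending on $m$ and the spectra) such that
\[
\max_{1\le j\le m}\;\max_{1\le i\le n}\bigl|\lambda^{(i)}_j-\mu^{(\pi(i))}_j\bigr|\le K_m\delta.
\]
For $m=1$ this is the classical Bauer--Fike/Hoffman--Wielandt inequality for normal matrices, with optimal constant $1$. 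For $m\ge 2$ one exploits that, thanks to commutativity and normality, every complex linear combination $\sum_j a_jX_j$ is itself normal with $\|\sum_j a_j(X_j-Y_j)\|\le\|a\|_1\delta$; applying the univariate estimate to a spanning family of coefficient vectors in $\CC^m$ and synchronizing the resulting coordinatewise permutations via a Hall marriage argument produces the single global $\pi$.

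With such a $\pi$ in hand, I would replace $Q_X$ by $Q_XP_\pi$ (still a joint diagonalizer), so that without loss of generality $\|\Lambda_j-\Delta_j\|\le K_m\delta$ for every $j$. Set $W:=Q_YQ_X^*\in\U{n}$ and $\Psi:=\mathrm{Ad}[W]$. Then
\[
\Psi(X_j)=WX_jW^*=Q_Y\Lambda_jQ_Y^*,\qquad Y_j=Q_Y\Delta_jQ_Y^*,
\]
from which everything required drops out at once: $\sigma(\Psi(X_j))=\sigma(\Lambda_j)=\sigma(X_j)$; both $\Psi(X_j)$ and $Y_j$ are conjugates by $Q_Y$ of diagonal matrices, so $[\Psi(X_j),Y_j]=Q_Y[\Lambda_j,\Delta_j]Q_Y^*=0$; and
\[
\|\Psi(X_j)-Y_j\|=\|\Lambda_j-\Delta_j\|\le K_m\delta,\qquad \|\Psi(X_j)-X_j\|\le\|\Psi(X_j)-Y_j\|+\|Y_j-X_j\|\le(K_m+1)\delta.
\]
Absorbing the $+1$ into the constant yields the stated bound, and one chooses $\delta\le\varepsilon/(K_m+1)$ to control the perturbation by the prescribed $\varepsilon$.

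The main obstacle is unquestionably the joint spectral variation step. The univariate version is classical, but producing a single permutation that is simultaneously compatible with all $m$ coordinates of the joint spectrum is delicate, since naive coordinate-by-coordinate matching can yield $m$ inconsistent permutations. A route that avoids citing a separate multidimensional matching theorem is to feed $W_0:=Q_XQ_Y^*$ together with the commuting diagonal contractions $\Delta_1,\ldots,\Delta_m$ into Lemma \ref{existence_of_refined_almost_unit}: once one checks, using the univariate matching to pre-order the columns of $Q_X$, that $\max_k\|W_0\Delta_kW_0^*-\Delta_k\|$ is $O(\delta)$, that lemma yields a unitary $Z$ close to $W_0$ commuting with every $\Delta_k$, i.e.\ block-diagonal with respect to the joint spectral decomposition of $Y$; this $Z$ is precisely the permutation-type correction that synchronizes all $m$ coordinatewise matchings into the global $\pi$ and delivers the constant $K_m$ in terms of the combinatorial data already appearing in Lemma \ref{existence_of_refined_almost_unit}.
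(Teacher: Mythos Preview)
The paper does not actually prove this lemma: it is quoted verbatim from \cite{Vides_homotopies} (Loring--Vides), with the sentence ``The following result was proved in \cite{Vides_homotopies}'' standing in for a proof. So there is no in-paper argument to compare your proposal against; the content lives entirely in that reference.

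On the merits of your sketch: the overall architecture---jointly diagonalize, match joint eigenvalues by a permutation, compose diagonalizers to build $W$---is exactly the right shape, and your verification that the resulting $\Psi$ meets the three conclusions is clean. Two points deserve attention, though. First, note the Remark immediately following the lemma in the paper: $K_m$ is asserted to depend \emph{only} on $m$, not on the spectra. Your parenthetical ``depending on $m$ and the spectra'' is therefore weaker than what is being claimed; the Hall--marriage/linear--combination route you outline can in principle yield a purely dimensional constant, but you should be explicit that this is the target. Second, and more seriously, your proposed shortcut through Lemma~\ref{existence_of_refined_almost_unit} does not do what you want. The constant $C$ produced there explicitly involves $\min_{j\neq l}|\alpha_{k,j}-\alpha_{k,l}|$, i.e.\ the spectral gaps of the $D_k$, so any $K_m$ manufactured from it would depend on the spectra of the $Y_j$, contradicting the Remark. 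Moreover, a univariate pre-ordering of the columns of $Q_X$ only forces $\|\Lambda_k-\Delta_k\|$ to be $O(\delta)$ for the single index $k$ used in the ordering; for the remaining indices you have no control, so $\max_k\|W_0\Delta_kW_0^*-\Delta_k\|$ need not be $O(\delta)$ and the hypothesis of Lemma~\ref{existence_of_refined_almost_unit} is not met. The genuine multidimensional matching argument (or a direct appeal to the result in \cite{Vides_homotopies}) is not avoidable here.
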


\begin{remark}
The constant $K_m$ in the statement of Lemma \ref{Joint_spectral_variation_inequality_2} depends only on $m$.
\end{remark}

\begin{lemma}
\label{Existence_of_local_path_of_homomorphisms}
 Given any $\varepsilon\geq 0$ and $m$ non-constant polynomials $p_1(x),\ldots,p_m(x)$ over $\CC$, there is $\delta\geq 0$ such that for 
 any integer $n\geq 1$ and any $2$ $m$-tuples $(X_{1},\ldots,X_{m})$ ,$(Y_{1},\ldots,Y_{m})$ in $\mathbb{I}^m_n$ which satisfy the relations 
  \begin{eqnarray*}
  \left\{
  \begin{array}{l}
   p_j(X_{j})=p_j(Y_j)=\mathbf{0}_n,\: \: \: 1\leq j\leq m\\
   \eth((X_1,\ldots,X_m),(Y_1,\ldots,Y_{m}))\leq \delta,
  \end{array}
 \right.
 \end{eqnarray*}
there is a path $\{\Psi_t\}_{t\in [0,1]}$ $\ast$-homomorphisms $\Psi_t:M_n\to M_n$ such that the extented maps 
$\hat{\Psi}_t:M_n^m\to M_n^m$ satisfy the following relations,
 \[
 \left\{
 \begin{array}{l}
 \hat{\Psi}_0(Y_1,\ldots,Y_m)=(X_1,\ldots,X_m),\\
 \hat{\Psi}_1(Y_1,\ldots,Y_m)=(Y_1,\ldots,Y_m),\\
 \end{array}
 \right.
 \]
 together with the constraint
 \[
 \eth(\hat{\Psi}_t(Y_1,\ldots,Y_m),(Y_1,\ldots,Y_m))\leq \varepsilon,
 \]
 for each $0\leq t\leq 1$. 
\end{lemma}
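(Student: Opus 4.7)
The plan is to factor the conjugation that carries $(X_1,\dots,X_m)$ to $(Y_1,\dots,Y_m)$ into a ``stabilizer'' piece that commutes with each $Y_j$ and a residual unitary lying close to $\mathbf{1}_n$ in $\U{n}$, and then to contract only the residual piece along a one-parameter path of inner $*$-homomorphisms.

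First I would invoke Lemma~\ref{Joint_spectral_variation_inequality_2}: for $\delta$ sufficiently small it supplies a unitary $W\in\U{n}$ such that $\Psi=\mathrm{Ad}[W]$ satisfies $\sigma(\Psi(X_j))=\sigma(X_j)$, $[\Psi(X_j),Y_j]=\mathbf{0}_n$ and $\|\Psi(X_j)-Y_j\|\le K_m\delta$ for every $j$. The crucial observation is then that $\Psi(X_j)$ and $Y_j$ are commuting Hermitian matrices whose spectra both lie in the \emph{finite} set $\mathbf{Z}(p_j)\cap\RR$. In a common orthonormal eigenbasis each pair of corresponding eigenvalues $(\lambda,\mu)$ satisfies $|\lambda-\mu|\le K_m\delta$, which forces $\lambda=\mu$ as soon as $K_m\delta$ is strictly smaller than the minimum gap between distinct points of $\mathbf{Z}(p_j)\cap\RR$. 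I can therefore shrink $\delta$ so that $\Psi(X_j)=Y_j$ exactly for every $j$; equivalently, $W^{*}Y_j W=X_j$.

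Next I would apply Lemma~\ref{existence_of_refined_almost_unit} to $W^{*}$ and to the commuting normal contractions $Y_1,\dots,Y_m$, after first conjugating by a joint unitary diagonalizer of the $Y_k$'s to put them into the diagonal form required by the lemma. Since $\|(W^{*})Y_k(W^{*})^{*}-Y_k\|=\|X_k-Y_k\|\le\delta$, the lemma produces a unitary $Z\in\U{n}$ with $[Z,Y_k]=\mathbf{0}_n$ and $\|\mathbf{1}_n-W^{*}Z\|\le C\,\delta$, where $C$ depends only on $m$ and on the polynomials $p_1,\dots,p_m$ (through the cardinalities and the minimum gaps of $\mathbf{Z}(p_j)\cap\RR$), hence is uniform in $n$. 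Shrinking $\delta$ further so that $W^{*}Z$ lies in the domain of the principal logarithm, I write $W^{*}Z=e^{\mathbf{i}H}$ with $H$ Hermitian and $\|H\|\le C'C\,\delta$ for an absolute constant $C'$.

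Finally I would set $U_t:=e^{\mathbf{i}(1-t)H}\,Z^{*}$ and $\Psi_t:=\mathrm{Ad}[U_t]$ for $t\in[0,1]$. Each $\Psi_t$ is an inner $*$-homomorphism; the map $t\mapsto U_t$ is smooth; and $U_0=e^{\mathbf{i}H}Z^{*}=W^{*}Z Z^{*}=W^{*}$ while $U_1=Z^{*}$, so $\hat{\Psi}_0(Y_1,\dots,Y_m)=(X_1,\dots,X_m)$ and, using $[Z,Y_j]=\mathbf{0}_n$, $\hat{\Psi}_1(Y_1,\dots,Y_m)=(Y_1,\dots,Y_m)$. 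For intermediate $t$ the same commutation gives $\Psi_t(Y_j)=e^{\mathbf{i}(1-t)H}Y_je^{-\mathbf{i}(1-t)H}$, whence $\|\Psi_t(Y_j)-Y_j\|\le 2\|H\|\le 2C'C\,\delta$, and a final choice $\delta\le \varepsilon/(2C'C)$ delivers $\eth(\hat{\Psi}_t(\mathbf{Y}),\mathbf{Y})\le\varepsilon$ uniformly in $t\in[0,1]$. I expect the main obstacle to be the second step above, namely promoting $\Psi(X_j)\approx Y_j$ to $\Psi(X_j)=Y_j$; this is precisely where the algebraic hypothesis $p_j(X_j)=p_j(Y_j)=\mathbf{0}_n$ is essential, since it traps both spectra in a fixed finite set and supplies the $n$-independent spectral gap that the approximant must beat.
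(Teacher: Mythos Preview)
Your argument is correct and follows the same strategy as the paper: invoke Lemma~\ref{Joint_spectral_variation_inequality_2} to get a unitary conjugating $\mathbf{X}$ to $\mathbf{Y}$ (after using the finite zero sets $\mathbf{Z}(p_j)\cap\RR$ to upgrade $\Psi(X_j)\approx Y_j$ to equality), then apply Lemma~\ref{existence_of_refined_almost_unit} to split off a stabilizer $Z$ and contract the residual near-identity unitary via its logarithm. The only cosmetic difference is that the paper defines $\Psi_t=\mathrm{Ad}[e^{(1-t)K}]$ directly from the skew-Hermitian logarithm of $\hat{W}^{*}Z$, whereas you carry the commuting factor $Z^{*}$ inside $U_t=e^{\mathbf{i}(1-t)H}Z^{*}$; since $[Z,Y_j]=\mathbf{0}_n$ the two constructions act identically on each $Y_j$.
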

\begin{proof}
 By changing basis if necessary, we can assume that $\NS{Y}{m}$ are diagonal matrices. By Lemma \ref{flat_connectivity}, the result is clear when $n=1$ or $|\sigma(X_j)|=|\sigma(Y_j)|=\deg(p_j)=1$, for each $1\leq j\leq m$. Without loss of generality let us assume that $\max_{1\leq j\leq m}\deg(p_j)\geq 2$, $\max_{1\leq j\leq m}|\sigma(X_j)|\geq 2$, $\max_{1\leq j\leq m}|\sigma(Y_j)|\geq 2$ and $n\geq 2$, let us set 
 $K:=\prod_{j=1}^m \deg(p_j)$ and $L:=\max_{1\leq j\leq m} \deg(p_j)$, and let us consider the sets $\mathbf{Z}(p_j)=\{z\in \disk| p_j(z)=0\}$, $1\leq j\leq m$. 
 
 By Lemma \ref{Joint_spectral_variation_inequality_2} we will have that there 
 are a constant $K_m$, a unitary $W\in M_n$ and an inner $\ast$-homomorphism $\Psi=\mathrm{Ad}[\hat{W}]:M_n\to M_n$ such that $[\Psi(X_j),Y_j]=0$ and $\|\Psi(X_j)-Y_j\|\leq K_m\delta$. Let $\varepsilon>0$ be given. It is enough to consider the case $\varepsilon< 4\sin(1/8)<1/2$. Since $\max\{\|X_j\|,\|Y_j\|\}\leq 1$ for each $1\leq j\leq m$, for the rest of the proof we will only consider the sets $\mathbf{Z}(p_j)\cap [-1,1]$, $1\leq j\leq r$. Let $h_p>0$ be a number chosen so that, 
\[
h_p\leq \frac{1}{3K_m}\min_{1\leq j\leq N}\{\min_{x,y\in \mathbf{Z}(p_j)\cap [-1,1]}\{|x-y|\:|\:x\neq y\}\}
\] 
since $\mathbf{Z}(p_j)\cap[-1,1]\subset [-1,1]$ for each $1\leq j\leq r$, we have that $h_p\leq 2$. We will have that there is 
 $\delta>0$ that can be chosen so that.
 \begin{eqnarray}
 \delta&\leq& \frac{2 h_p\arcsin(\varepsilon/4)}{3\pi\sqrt{2}m K_mKL(L-1)} \nonumber\\
 &<&  \frac{h_p\varepsilon}{2} 
  \leq \min\left\{\varepsilon,\frac{1}{3K_m}\min_{1\leq j\leq N}\left\{\min_{x,y\in \mathbf{Z}(p_j)\cap [-1,1]}\left\{|x-y|\:|\:x\neq y\right\}\right\}\right\}
 \label{delta_bound}
 \end{eqnarray}

Since 
 $\delta<\frac{1}{3K_m}\min_{1\leq j\leq N}\{\min_{x,y\in \mathbf{Z}(p_j)\cap [-1,1]}\{|x-y|\:|\:x\neq y\}\}$ and 
 $p_j(X_j)=p_j(\Psi(X_j))=p_j(Y_j)=\mathbf{0}_n$, we will have that $Y_j=\hat{W}X_j\hat{W}^*=\Psi(X_j)$, otherwise we get a contradiction. 
 
 By Remark \ref{finite_order_remark} for each $1\leq j\leq m$ there is an OPU $\{P_{j,k_j}\}$ such that $Y_j\in \mathrm{span}\: \{P_{j,k}\}$, and by Corollary \ref{existence_of_refined_OPU} we have that the projective refinement $\mathcal{P}:=\mathcal{R}(\{P_{1,k_1}\},\ldots,\{P_{m,k_m}\})=\{P_1,\ldots,P_{K'}\}$ is an OPU with $|\mathcal{P}|\leq {K'}\leq K$, such that $Y_j\in \mathrm{span}\: \mathcal{P}$ for each $1\leq j\leq m$. 
 
 By Lemma \ref{existence_of_refined_almost_unit}, \eqref{Universal_constant_definition} and \eqref{delta_bound}, there is a 
 unitary $Z$ that satisfies the constraint $\|Z-\hat{W}\|\leq \frac{4}{\pi}\arcsin(\varepsilon/4)$, together with the relations 
 $[Z,\Psi(X_j)]=[Z,Y_j]=0$, $1\leq j\leq N$. If we set $W:=\hat{W}^*Z$, we will have that
 \begin{equation}
 WY_jW^*=\hat{W}^*Y_j\hat{W}=\Psi^{-1}(Y_j)=X_j,
\label{alternative_morphism_representation}
\end{equation}  
 for each $1\leq j\leq m$. Moreover, as a consequence of the proof 
 of \cite[Theorem 5.2]{Bhatia_Spectral_Subspace}, we will have that there is a skew hermitian matrix $K\in M_n$ that satisfies the relations. 
 \begin{equation}
\left\{
\begin{array}{l}
e^{K}=W,\\
 \|K\|\leq \frac{\pi}{2}\|\mathbf{1}_n-W\|=\frac{\pi}{2}\|Z-\hat{W}\|\leq \frac{\pi}{2}\frac{4}{\pi}\arcsin(\varepsilon/2)=2\arcsin(\varepsilon/4)
\end{array}
\right.
 \label{Unitary_log_constraint}
 \end{equation}
Since for any $t\in [0,1]$, we will have that 
 \begin{equation}
 |1-e^{\mathbf{i}t}|=2\sin\left(\frac{t}{2}\right).
 \label{special_functional_calculus_identity}
 \end{equation}
 As a consequence of \eqref{Unitary_log_constraint} and \eqref{special_functional_calculus_identity} we will have that 
 if we set $W(t)=e^{tK}$ with $0\leq t\leq 1$, then $W(t)\in \U{n}$ for each $t\in [0,1]$, $W(0)=\mathbf{1}_n$, $W(1)=W$, and we can obtain the following estimate
 \begin{equation}
 \|\mathbf{1}_n-W(t)\|\leq 2\sin\left(\frac{t\|K\|}{2}\right)\leq 2\sin\left(\frac{\|K\|}{2}\right)\leq 2\sin\left(\arcsin(\varepsilon/4)\right)\leq \frac{\varepsilon}{2}
 \label{unitary_path_normed_constraint}
 \end{equation}
for each $t\in [0,1]$. 

Let us set $\Phi_t=\mathrm{Ad}[W(t)]$, we will have that $\Phi_0=\mathrm{id}_{M_n}$, and by \eqref{alternative_morphism_representation} we will have that
\begin{equation}
\Phi_1(Y_j)=X_j=\Psi^{-1}(Y_j),
\label{second_alternative_morphism_representation}
\end{equation}
for each $1\leq j\leq m$. Furthermore, as a consequence of \eqref{Unitary_log_constraint} and \eqref{unitary_path_normed_constraint} we will have that,
\begin{eqnarray}
 \|\Phi_t(Y_j)-Y_j\|&=&\|W(t)Y_j W(t)^\ast -Y_j\| \nonumber\\
                    &=&\|W(t)Y_j -Y_jW(t)\|\nonumber\\
                    &\leq&\|W(t)Y_j-Y_j\|+\|Y_j-Y_jW(t)\|\nonumber\\
                    &\leq&2\|\mathbf{1}_n-W(t)\|\leq\varepsilon
 \label{morphism_path_normed_constraint}
 \end{eqnarray}
 for each $0\leq t\leq 1$. If we set $\Psi_t=\Phi_{1-t}$, we will have that each path $\{\Psi_t(Y_j)\}_{t\in [0,1]}$ is differentiable with respect to $t$, and satisfies the relation 
\begin{equation}
\left\{
\begin{array}{l}
\Psi_0(Y_j)=X_j,\\
\Psi_1(Y_j)=Y_j,
\end{array}
\right.
\label{interpolating_condition}
\end{equation} 
together with the constraint 
\begin{equation}
\|\Psi_t(Y_j)-Y_j\|\leq \varepsilon,
\label{normed_constraint_component_path}
\end{equation} 
for each $1\leq j\leq m$ and each $t\in [0,1]$. By \eqref{interpolating_condition} and \eqref{normed_constraint_component_path}, we will have that
 \[
 \left\{
 \begin{array}{l}
 \hat{\Psi}_0(Y_1,\ldots,Y_m)=({\Psi}_0(Y_1),\ldots,{\Psi}_0(Y_m))=(X_1,\ldots,X_m),\\
 \hat{\Psi}_1(Y_1,\ldots,Y_m)=({\Psi}_1(Y_1),\ldots,{\Psi}_1(Y_m))=(Y_1,\ldots,Y_m),\\
 \end{array}
 \right.
 \]
and
 \[
 \eth(\hat{\Psi}_t(Y_1,\ldots,Y_m),(Y_1,\ldots,Y_m))=\max_{1\leq j\leq m}\|\Psi_t(Y_j)-Y_j\|\leq \varepsilon,
 \]
for each $0\leq t\leq 1$. This completes the proof.
\end{proof}

\begin{theorem}
\label{first_local_hermitian_algebraic_connectivity}
Given any $\varepsilon\geq 0$ and $r$ non-constant polynomials $p_1(x_1,\ldots,x_m),\ldots\\
,p_r(x_1,\ldots,x_m)$ of $m$ complex variables, with coefficients over $\CC$, and with finite zero set $\mathbf{Z}(p_1,\ldots,p_r)\subset [-1,1]^m$, there is $\delta\geq 0$ such that for 
 any integer $n\geq 1$ and any $2$ $m$-tuples $(X_{1},\ldots,X_{m})$ ,$(Y_{1},\ldots,Y_{m})$ in $\mathbb{I}^m_n$ which satisfy the relations 
  \begin{eqnarray*}
  \left\{
  \begin{array}{l}
   p_j(X_1,\ldots,X_m)=p_j(Y_1,\ldots,Y_m)=\mathbf{0}_n,\: \: \: 1\leq j\leq r\\
   \eth((X_1,\ldots,X_m),(Y_1,\ldots,Y_{m}))\leq \delta,
  \end{array}
 \right.
 \end{eqnarray*}
there is a path $\varphi=(\varphi_1,\ldots,\varphi_m)\in C([0,1],\mathbb{I}^m_n)$ that satisfies the following relations,
 \begin{equation}
  \left\{
 \begin{array}{l}
 \varphi(0)=(X_1,\ldots,X_m),\\
 \varphi(1)=(Y_1,\ldots,Y_m),\\
 \end{array}
 \right.
 \label{solvent_constraint_1}
 \end{equation}
 together with the constraints
 \begin{equation}
 \left\{
 \begin{array}{l}
 p_j(\varphi(t))=\mathbf{0}_n, \:\: 1\leq j\leq r,\\
 \eth(\varphi(t),(Y_1,\ldots,Y_m))\leq \varepsilon,
 \end{array}
 \right.
 \label{solvent_constraint_2}
 \end{equation}
 for each $0\leq t\leq 1$.
\end{theorem}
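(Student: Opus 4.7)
The plan is to reduce the multivariate polynomial constraints to single-variable ones and then invoke Lemma \ref{Existence_of_local_path_of_homomorphisms}, exploiting the fact that any $\ast$-homomorphism automatically commutes with evaluation of polynomials on any tuple.

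First I would enumerate the finite zero set $\mathbf{Z}(p_1,\ldots,p_r) = \{z^{(1)},\ldots,z^{(N)}\} \subset [-1,1]^m$ and, for each $1 \leq j \leq m$, form the single-variable polynomial
\[
q_j(x) := \prod_{\zeta \in \{z_j^{(1)},\ldots,z_j^{(N)}\}} (x - \zeta),
\]
where duplicates in the $j$th coordinate are collapsed. Since $(X_1,\ldots,X_m), (Y_1,\ldots,Y_m) \in \mathbb{I}^m_n$ consist of pairwise commuting hermitian contractions annihilating each $p_k$, their joint spectra (with respect to any joint diagonalizer) lie entirely inside $\mathbf{Z}(p_1,\ldots,p_r)$, so each coordinate spectrum $\sigma(X_j), \sigma(Y_j)$ is contained in $\{z_j^{(1)},\ldots,z_j^{(N)}\}$. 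The spectral theorem then yields $q_j(X_j) = q_j(Y_j) = \mathbf{0}_n$.

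Next I would apply Lemma \ref{Existence_of_local_path_of_homomorphisms} with input $\varepsilon$ and the single-variable polynomials $q_1,\ldots,q_m$, obtaining a $\delta > 0$ (taken to be the $\delta$ of the present theorem) such that whenever $\eth((X_1,\ldots,X_m),(Y_1,\ldots,Y_m)) \leq \delta$ there exists a path $\{\Psi_t\}_{t \in [0,1]}$ of (inner) $\ast$-homomorphisms $\Psi_t = \mathrm{Ad}[W(t)]$ satisfying $\hat{\Psi}_0(Y_1,\ldots,Y_m) = (X_1,\ldots,X_m)$, $\hat{\Psi}_1(Y_1,\ldots,Y_m) = (Y_1,\ldots,Y_m)$, and $\eth(\hat{\Psi}_t(Y_1,\ldots,Y_m),(Y_1,\ldots,Y_m)) \leq \varepsilon$ for all $t$. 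I then define
\[
\varphi(t) := \hat{\Psi}_t(Y_1,\ldots,Y_m), \qquad 0 \leq t \leq 1,
\]
which is continuous in $t$ because the underlying unitary path $W(t) = e^{(1-t)K}$ from the proof of Lemma \ref{Existence_of_local_path_of_homomorphisms} is continuous.

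The remaining verifications are straightforward. Because each $\Psi_t$ is an inner $\ast$-homomorphism, it preserves hermiticity, commutativity, and the operator norm, so $\varphi(t) \in \mathbb{I}^m_n$ throughout. The crucial point (and the conceptual heart of the argument) is that for every multivariate polynomial $p_k$,
\[
p_k(\varphi(t)) = p_k(\Psi_t(Y_1),\ldots,\Psi_t(Y_m)) = \Psi_t(p_k(Y_1,\ldots,Y_m)) = \Psi_t(\mathbf{0}_n) = \mathbf{0}_n,
\]
so the algebraic constraints $p_k(\varphi(t)) = \mathbf{0}_n$ are preserved automatically; the distance estimate $\eth(\varphi(t),(Y_1,\ldots,Y_m)) \leq \varepsilon$ is inherited directly from Lemma \ref{Existence_of_local_path_of_homomorphisms}. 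The main obstacle I anticipate is purely conceptual rather than technical: one must recognize that the original multivariate algebraic constraint can be dualized, via the finiteness of $\mathbf{Z}(p_1,\ldots,p_r)$ and spectral containment, into coordinate-wise minimal polynomial constraints compatible with the hypotheses of Lemma \ref{Existence_of_local_path_of_homomorphisms}, and that an inner $\ast$-homomorphism transports the original multivariate relations along the path for free. Once this reduction is in place, the existence of $\delta$ independent of $n$ is inherited from the uniform constant produced by Lemmas \ref{existence_of_refined_almost_unit} and \ref{Joint_spectral_variation_inequality_2}.
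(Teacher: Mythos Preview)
Your proposal is correct and follows essentially the same route as the paper's own proof: the paper likewise projects the finite zero set onto each coordinate to build univariate annihilating polynomials $\hat{p}_k$ (your $q_j$), invokes Lemma~\ref{Existence_of_local_path_of_homomorphisms} to obtain the path of inner $\ast$-homomorphisms $\Psi_t$, sets $\varphi(t)=\hat{\Psi}_t(Y_1,\ldots,Y_m)$, and then uses the $\ast$-homomorphism property to carry the multivariate relations $p_k(\varphi(t))=\Psi_t(p_k(Y_1,\ldots,Y_m))=\mathbf{0}_n$ along the path. The only cosmetic difference is that the paper spells out the verification of hermiticity and commutativity of $\varphi(t)$ as separate displayed equations, whereas you summarize them in a sentence.
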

\begin{proof}
Given $r$ polynomials $p_1(x_1,\ldots,x_m),\ldots,p_r(x_1,\ldots,x_m)$ of $m$ complex variables, with coefficients over $\CC$, as in the statement of this theorem. Let us set $L=|\mathbf{Z}(p_1,\ldots,p_r)|<\infty$, then $\mathbf{Z}(p_1,\ldots,p_r)$ can be listed in the form 
\begin{equation}
\mathbf{Z}(p_1,\ldots,p_r)=\{(x_{j,1},\ldots,x_{j,m})\: | \: 1\leq j \leq L\}.
\label{zero_set_list_representation}
\end{equation}
 
For each $1\leq k\leq m$, let us set $Z_k=\{x_{j,k}\: | \: 1\leq j\leq L\}$, and let us write $\breve{Z}_k$ to denote the set consisting of all distinct numbers in $Z_k$ counted without multiplicity. We will have that 
$\breve{Z}_k\subseteq Z_k$, and that for each $x\in Z_k$ there is $y\in \breve{Z}_k$ such that $x=y$, for each $1\leq k\leq m$. Let us set 
\begin{equation}
\hat{p}_k(x_k)=\prod_{y\in \breve{Z}_k}(x_k-y), 
\label{artificial_polynomials}
\end{equation}
for each $1\leq k\leq m$. We will have that each $\hat{p}_k(x_k)$ is a polynomial over $\CC$ such that $1\leq \deg(\hat{p}_k)= |\breve{Z}_k|\leq |Z_k|$, and that $\hat{p}_k(x)=0$ for every $x\in Z_k$ and each $1\leq k\leq m$. 

Given any two $(X_{1},\ldots,X_{m})$ ,$(Y_{1},\ldots,Y_{m})$ in $\mathbb{I}^m_n$ as in the statement of this theorem, as a direct application of multivariate functional calculus for commuting matrices, we will have that 
for each $1\leq j \leq r$ and each $1\leq k\leq n$,
\begin{equation}
\left\{
\begin{array}{l}
p_j(\Lambda^{(k)}(X_1,\ldots,X_m))=p_j(\Lambda^{(k)}(X_1),\ldots,\Lambda^{(k)}(X_m))=0,\\
p_j(\Lambda^{(k)}(Y_1,\ldots,Y_m))=p_j(\Lambda^{(k)}(Y_1),\ldots,\Lambda^{(k)}(Y_m))=0.
\end{array}
\right.
\label{applied_functional_calculus}
\end{equation}
By \eqref{applied_functional_calculus}, we will have that $\Lambda(X_1,\ldots,X_m),\Lambda(Y_1,\ldots,Y_m)\subseteq \mathbf{Z}(p_1,\ldots,p_r)$, and this implies that,
\begin{equation}
\hat{p}_j(X_j)=\hat{p}_j(\Lambda(X_j))=\mathbf{0}_n=\hat{p}_j(\Lambda(Y_j))=\hat{p}_j(Y_j)
\label{artificial_annihilator_condition}
\end{equation}
for each $1\leq j\leq m$.

Given $\varepsilon>0$, by Lemma \ref{Existence_of_local_path_of_homomorphisms} applied to $\hat{p}_1,\ldots,\hat{p}_m$ and any $2$ $m$-tuples $(X_{1},\ldots,X_{m})$ ,$(Y_{1},\ldots,Y_{m})\in\mathbb{I}^m_n$ as in the statement of this theorem, we will have that there is a family of $\ast$-homomorphisms $\Psi_t:M_n\to M_n$ such that,
 \begin{equation}
 \left\{
 \begin{array}{l}
 \hat{\Psi}_0(Y_1,\ldots,Y_m)=(X_1,\ldots,X_m),\\
 \hat{\Psi}_1(Y_1,\ldots,Y_m)=(Y_1,\ldots,Y_m),\\
 \end{array}
 \right.
 \label{morphism_constraint_1}
 \end{equation}
and
 \begin{equation}
  \eth(\hat{\Psi}_t(Y_1,\ldots,Y_m),(Y_1,\ldots,Y_m))\leq \varepsilon,
 \label{morphism_constraint_2}
 \end{equation}
for each $0\leq t\leq 1$. Let us set $\varphi(t)=(\varphi_1(t),\ldots,\varphi_m(t))$, with $\varphi_j(t)=\Psi_t(Y_j)$ for $1\leq j\leq m$ and $0\leq t\leq 1$, by \eqref{morphism_constraint_1} and 
\eqref{morphism_constraint_2} we will have that.
 \begin{equation}
 \left\{
 \begin{array}{l}
 \varphi(0)=(X_1,\ldots,X_m),\\
 \varphi(1)=(Y_1,\ldots,Y_m),\\
 \eth(\varphi(t),(Y_1,\ldots,Y_m))\leq \varepsilon,
 \end{array}
 \right.
 \label{curved_path_condition_0}
 \end{equation}
 Furthermore, for each $1\leq i,j\leq m$, $1\leq k \leq r$ and each $0\leq t\leq 1$, we will have that.
 \begin{eqnarray}
 \varphi_j(t)^\ast=(\Psi_t(Y_j))^\ast=\Psi_t(Y_j^\ast)=\Psi_t(Y_j)=\varphi_j(t)
 \label{curved_path_condition_1}
 \end{eqnarray}
 
  \begin{eqnarray}
  \varphi_j(t)\varphi_i(t)-\varphi_i(t)\varphi_j(t)&=&\Psi_t(Y_j)\Psi_t(Y_i)-\Psi_t(Y_i)\Psi_t(Y_j)\nonumber \\
                                                   &=&\Psi_t(Y_jY_i-Y_iY_j)=\Psi_t(\mathbf{0}_n)=\mathbf{0}_n
 \label{curved_path_condition_2}
 \end{eqnarray}
 
   \begin{eqnarray}
  p_k(\varphi(t))&=&p_k(\Psi_t(Y_1),\ldots,\Psi_t(Y_m))\nonumber\\
  &=&\Psi_t(p_k(Y_1,\ldots,Y_m))=\Psi_t(\mathbf{0}_n)=\mathbf{0}_n
 \label{curved_path_condition_3}
 \end{eqnarray}
 By the definition of $\varphi$, and by \eqref{curved_path_condition_1} and \eqref{curved_path_condition_2}, we will have that $\varphi\in C^1([0,1],\mathbb{I}^m_n)$. And by \eqref{curved_path_condition_0} and \eqref{curved_path_condition_3}, we will have that the path $\varphi\in C^1([0,1],\mathbb{I}^m_n)$ satisfies the conditions \eqref{solvent_constraint_1} and \eqref{solvent_constraint_2}. This completes the proof.
\end{proof}

\begin{definition}
We say that a matrix set $\mathbb{S}^m_n\subseteq M_n^m$ is uniformly piecewise differentiably path connected with respect to the metric $\eth$, if given $\varepsilon>0$, there is $\delta>0$ such that, for any $\mathbf{X}\in \mathbb{S}^m_n$ and any $\mathbf{Y}\in \mathbb{S}^m_n\cap B_{\eth}(\mathbf{X},\delta)$, there is a piecewise $C^1$ path $\gamma\in C([0,1],\mathbb{S}^m_n)$ such that, $\gamma(0)=\mathbf{X}$, $\gamma(1)=\mathbf{Y}$ and $\gamma(t)\in B_\eth(\mathbf{X},\varepsilon)$, for each $0\leq t \leq 1$.
\end{definition}

\begin{corollary}
\label{first_connectivity_corollary}
Given $r$ non-constant polynomials $p_1(x_1,\ldots,x_m),\ldots,p_r(x_1,\ldots,x_m)$ of $m$ complex variables, with coefficients over $\CC$, and with finite zero set $\mathbf{Z}(p_1,\ldots,p_r)\\
\subset\RR^m$. The algebraic matrix $m$-cube $\mathbb{ZI}_n^m(p_1,\ldots,p_r)$ is uniformly piecewise differentiably path connected with respect to the metric $\eth$.
\end{corollary}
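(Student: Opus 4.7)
The plan is to reduce this corollary to Theorem \ref{first_local_hermitian_algebraic_connectivity}, which is essentially the same statement modulo two cosmetic discrepancies: (i) the theorem's zero set is required to lie in $[-1,1]^m$ rather than in $\RR^m$, and (ii) the theorem controls the connecting path by a ball centered at the endpoint $\mathbf{Y}$, whereas the definition of uniform piecewise differentiable path connectivity demands a ball centered at the starting point $\mathbf{X}$.

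To address (i), I would use that every $\mathbf{X}\in \mathbb{I}^m_n$ consists of commuting hermitian contractions, so each of its joint eigenvalue tuples lies in $[-1,1]^m$. By multivariate functional calculus, the identities $p_j(\mathbf{X})=\mathbf{0}_n$ then force the joint spectrum of any $\mathbf{X}\in \mathbb{ZI}_n^m(p_1,\ldots,p_r)$ into $\tilde{Z}:=\mathbf{Z}(p_1,\ldots,p_r)\cap [-1,1]^m$, which is a finite subset of $[-1,1]^m$. Letting $\tilde p_1,\ldots,\tilde p_{\tilde r}$ be polynomials over $\CC$ with common zero set exactly $\tilde Z$ (for instance the $m$ single-variable polynomials built as in \eqref{artificial_polynomials} from the coordinate projections of $\tilde Z$), we obtain $\mathbb{ZI}_n^m(p_1,\ldots,p_r)=\mathbb{ZI}_n^m(\tilde p_1,\ldots,\tilde p_{\tilde r})$, and the latter satisfies the hypotheses of the theorem.

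Given $\varepsilon>0$, I would then apply Theorem \ref{first_local_hermitian_algebraic_connectivity} with parameter $\varepsilon/2$ and polynomials $\tilde p_1,\ldots,\tilde p_{\tilde r}$ to obtain some $\delta_0>0$, and set $\delta:=\min\{\delta_0,\varepsilon/2\}$; this $\delta$ depends only on $\varepsilon$ and on the original data, not on $n$. For any $\mathbf{X},\mathbf{Y}\in \mathbb{ZI}_n^m(p_1,\ldots,p_r)$ with $\eth(\mathbf{X},\mathbf{Y})\leq \delta$, the theorem produces a piecewise $C^1$ path $\varphi\in C([0,1],\mathbb{I}^m_n)$ with $\varphi(0)=\mathbf{X}$, $\varphi(1)=\mathbf{Y}$, $\tilde p_j(\varphi(t))=\mathbf{0}_n$ for all $j,t$, and $\eth(\varphi(t),\mathbf{Y})\leq \varepsilon/2$. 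The $\tilde p_j$-annihilation forces the joint spectra of $\varphi(t)$ into $\tilde Z\subseteq \mathbf{Z}(p_1,\ldots,p_r)$, so $p_j(\varphi(t))=\mathbf{0}_n$ as well, placing $\varphi(t)$ in $\mathbb{ZI}_n^m(p_1,\ldots,p_r)$. The triangle inequality finally upgrades the endpoint-centered bound to $\eth(\varphi(t),\mathbf{X})\leq \eth(\varphi(t),\mathbf{Y})+\eth(\mathbf{Y},\mathbf{X})\leq \varepsilon/2+\delta\leq \varepsilon$, which is exactly the startpoint-centered control required by the definition.

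I do not expect any substantial obstacle here: both discrepancies are bookkeeping, and all the genuine analytic content (the existence of joint isospectral approximants, the projective refinement of spectral OPU's, and the unitary interpolation back to the identity) has already been carried out in Lemma \ref{Existence_of_local_path_of_homomorphisms} and Theorem \ref{first_local_hermitian_algebraic_connectivity}.
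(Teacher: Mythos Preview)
Your approach is the paper's: the corollary is obtained by a direct appeal to Theorem~\ref{first_local_hermitian_algebraic_connectivity}, and in fact the paper's own proof is a one-line citation of that theorem, without explicitly addressing either of the two discrepancies you flag. Your handling of (ii) via the triangle inequality with $\delta\le\varepsilon/2$ is correct and more careful than what the paper writes.

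There is one slip in your treatment of (i). The single-variable polynomials $\hat p_k$ of \eqref{artificial_polynomials}, built from the coordinate projections of $\tilde Z$, have common zero set equal to the \emph{product} of those projections, which in general strictly contains $\tilde Z$ (e.g.\ if $\tilde Z=\{(0,1),(1,0)\}$ one gets $\{0,1\}^2$). With that choice your sentence ``the $\tilde p_j$-annihilation forces the joint spectra of $\varphi(t)$ into $\tilde Z$'' is false, and the path produced by the theorem applied to the $\tilde p_j$'s need not satisfy $p_j(\varphi(t))=\mathbf 0_n$. The repair is immediate: take as your new family the original $p_1,\ldots,p_r$ \emph{together with} $\hat p_1,\ldots,\hat p_m$; their common zero set is then $\mathbf Z(p_1,\ldots,p_r)\cap\mathbf Z(\hat p_1,\ldots,\hat p_m)\subseteq\mathbf Z(p_1,\ldots,p_r)\cap[-1,1]^m=\tilde Z$, with equality since $\tilde Z$ lies in both. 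Alternatively, you may simply note that the path in the proof of Theorem~\ref{first_local_hermitian_algebraic_connectivity} has the form $\varphi(t)=\hat\Psi_t(\mathbf Y)$ for inner $\ast$-homomorphisms $\Psi_t$, and therefore preserves every polynomial relation satisfied by $\mathbf Y$, including the original $p_j$'s.
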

\begin{proof}
Given $\varepsilon>0$. Let us consider any $r$ non-constant polynomials $p_1(x_1,\ldots,x_m),\\
\ldots,p_r(x_1,\ldots,x_m)$ of $m$ complex variables, with coefficients over $\CC$, and with finite zero set $\mathbf{Z}(p_1,\ldots,p_r)\subset\RR^m$. By Theorem \ref{first_local_hermitian_algebraic_connectivity} we will have that 
there is $\delta>0$, such that for any $\mathbf{X}\in \mathbb{ZI}_n^m(p_1,\ldots,p_r)$ and any $\mathbf{Y}\in \mathbb{ZI}_n^m(p_1,\ldots,p_r)\cap B_{\eth}(\mathbf{X},\delta)$, there is a piecewise $C^1$ path $\gamma\in C([0,1],\mathbb{ZI}_n^m(p_1,\ldots,p_r))$ such that, $\gamma(0)=\mathbf{X}$, $\gamma(1)=\mathbf{Y}$ and $\gamma(t)\in B_\eth(\mathbf{X},\varepsilon)$, for each $0\leq t \leq 1$. This completes the proof.
\end{proof}

\begin{theorem}
\label{first_connectivity_application}
Given $r$ non-constant polynomials $p_1(z_1,\ldots,z_m),\ldots,p_r(z_1,\ldots,z_m)$ of $m$ complex variables, with coefficients over $\CC$, and with finite zero set $\mathbf{Z}(p_1,\ldots,p_r)\\
\subset\CC^m$. The algebraic matrix $m$-disk $\mathbb{ZD}_n^m(p_1,\ldots,p_r)$ is uniformly piecewise differentiably path connected with respect to the metric $\eth$.
\end{theorem}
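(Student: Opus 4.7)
The plan is to reduce to the hermitian case via the hermitian partition $\hat{\pi}$ (into real and imaginary parts) and to reassemble the resulting path back into normal tuples via the juncture $\upsilon$. The essential feature of Lemma \ref{Existence_of_local_path_of_homomorphisms} that makes the reduction work is that its interpolating path is built from a family of $\ast$-homomorphisms $\Psi_t:M_n\to M_n$; this property preserves the normal-contraction-commuting structure of $\mathbb{D}_n^m$ and, crucially, also preserves the original multivariate polynomial constraints $p_k(\mathbf{Y})=\mathbf{0}_n$, even though the reduction is executed using only single-variable auxiliary polynomials.

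Given $\varepsilon>0$ and $\mathbf{X},\mathbf{Y}\in \mathbb{ZD}_n^m(p_1,\ldots,p_r)$ with $\eth(\mathbf{X},\mathbf{Y})\leq\delta$ (with $\delta$ to be fixed at the end), I would first pass to $\hat{\pi}(\mathbf{X}),\hat{\pi}(\mathbf{Y})\in \mathbb{I}_n^{2m}$: normality together with pairwise commutativity of $\{X_j\}$ and $\{Y_j\}$ forces each partition to be a $2m$-tuple of pairwise commuting hermitian contractions (using Fuglede--Putnam to commute adjoints), and the estimates $\|\mathrm{Re}(A)-\mathrm{Re}(B)\|,\|\mathrm{Im}(A)-\mathrm{Im}(B)\|\leq\|A-B\|$ yield $\eth(\hat{\pi}(\mathbf{X}),\hat{\pi}(\mathbf{Y}))\leq\delta$. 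Next, enumerate $\mathbf{Z}(p_1,\ldots,p_r)\cap \disk[m]=\{(z_{j,1},\ldots,z_{j,m})\}_{j=1}^L$ with $z_{j,k}=a_{j,k}+\mathbf{i}b_{j,k}$, let $\breve{A}_k,\breve{B}_k\subset[-1,1]$ be the sets of distinct real and imaginary parts of the $k$-th coordinate, and form the single-variable polynomials
\[
\hat{p}_k(x):=\prod_{a\in\breve{A}_k}(x-a),\qquad \hat{p}_{m+k}(y):=\prod_{b\in\breve{B}_k}(y-b),\qquad 1\leq k\leq m.
\]
Multivariate functional calculus (the joint spectra of $\mathbf{X},\mathbf{Y}$ lie in $\mathbf{Z}(p_1,\ldots,p_r)\cap \disk[m]$) then places $\hat{\pi}(\mathbf{X}),\hat{\pi}(\mathbf{Y})\in \mathbb{ZI}_n^{2m}(\hat{p}_1,\ldots,\hat{p}_{2m})$. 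Lemma \ref{Existence_of_local_path_of_homomorphisms} applied to these $2m$ matrices and $2m$ one-variable polynomials with tolerance $\varepsilon/2$ supplies the required $\delta>0$ and a path $\{\Psi_t\}_{t\in[0,1]}$ of $\ast$-homomorphisms with $\hat{\Psi}_0(\hat{\pi}(\mathbf{Y}))=\hat{\pi}(\mathbf{X})$, $\hat{\Psi}_1(\hat{\pi}(\mathbf{Y}))=\hat{\pi}(\mathbf{Y})$, and $\eth(\hat{\Psi}_t(\hat{\pi}(\mathbf{Y})),\hat{\pi}(\mathbf{Y}))\leq\varepsilon/2$.

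Set $\gamma(t):=\upsilon(\hat{\Psi}_t(\hat{\pi}(\mathbf{Y})))=(\Psi_t(Y_1),\ldots,\Psi_t(Y_m))$, where the second identity uses linearity of $\Psi_t$ on $Y_j=\mathrm{Re}(Y_j)+\mathbf{i}\mathrm{Im}(Y_j)$. The endpoints $\gamma(0)=\mathbf{X}$, $\gamma(1)=\mathbf{Y}$ are immediate, and the triangle-inequality estimate $\|\Psi_t(Y_j)-Y_j\|\leq 2\cdot(\varepsilon/2)=\varepsilon$, obtained by splitting over real and imaginary parts, delivers $\eth(\gamma(t),\mathbf{Y})\leq\varepsilon$. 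The key step is verifying $\gamma(t)\in \mathbb{ZD}_n^m(p_1,\ldots,p_r)$ for every $t$, and this is precisely where the $\ast$-homomorphism property of $\Psi_t$ is decisive: pairwise commutativity, normality, and contractivity of the $\Psi_t(Y_j)$ all follow from preservation of products, adjoints and operator norm, while the multivariate polynomial constraint is transported through $\Psi_t$ via
\[
p_k(\Psi_t(Y_1),\ldots,\Psi_t(Y_m))=\Psi_t(p_k(Y_1,\ldots,Y_m))=\Psi_t(\mathbf{0}_n)=\mathbf{0}_n,
\]
which is valid because the $Y_j$ commute pairwise (so polynomial evaluation is unambiguous before and after application of $\Psi_t$). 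Hence $\gamma$ is the required piecewise $C^1$ path and uniform piecewise differentiable path connectivity of $\mathbb{ZD}_n^m(p_1,\ldots,p_r)$ follows.
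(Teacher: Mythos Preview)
Your proof is correct and follows essentially the same route as the paper: reduce to the hermitian setting via the partition $\hat{\pi}$, obtain a path of inner $\ast$-homomorphisms on the $2m$ hermitian parts, and push it back to $\mathbb{ZD}_n^m(p_1,\ldots,p_r)$ via the juncture $\upsilon$, using the $\ast$-homomorphism property to transport the multivariate constraints $p_k(\mathbf{Y})=\mathbf{0}_n$.

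The one organizational difference worth noting is that the paper first splits each $p_j$ into $\mathrm{Re}(p_j),\mathrm{Im}(p_j)$ to land in $\mathbb{ZI}_n^{2m}(\mathrm{Re}(p_1),\mathrm{Im}(p_1),\ldots,\mathrm{Re}(p_r),\mathrm{Im}(p_r))$ and then invokes Theorem~\ref{first_local_hermitian_algebraic_connectivity}, whose proof in turn manufactures the single-variable polynomials $\hat{p}_k$ from the coordinates of the zero set. You bypass that intermediate layer entirely and go straight to Lemma~\ref{Existence_of_local_path_of_homomorphisms} with the single-variable $\hat{p}_1,\ldots,\hat{p}_{2m}$, which is a cleaner shortcut since the real/imaginary polynomial decomposition is never actually used once the $\ast$-homomorphism path is in hand; the decisive observation in both arguments is the same identity $p_k(\Psi_t(Y_1),\ldots,\Psi_t(Y_m))=\Psi_t(p_k(Y_1,\ldots,Y_m))=\mathbf{0}_n$.
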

\begin{proof}
Given $\varepsilon>0$. Let us consider any $r$ non-constant polynomials $p_1(x_1,\ldots,x_m),\\
\ldots,p_r(x_1,\ldots,x_m)$ of $m$ complex variables, with coefficients over $\CC$, and with finite zero set $\mathbf{Z}(p_1,\ldots,p_r)\subset\CC^m$. 

By elementary theory of complex valued functions of several complex variables, we will have that there are $2r$ polynomials $\mathrm{Re}(p_1),\ldots,\mathrm{Re}(p_m)$, $\mathrm{Im}(p_1),\ldots,\mathrm{Im}(p_m)$ in $2m$ real variables $\mathrm{Re}(z_1),\ldots,\mathrm{Re}(z_m)$, 
$\mathrm{Im}(z_1),\ldots,\mathrm{Im}(z_m)$, with coefficients over $\mathbb{R}$, such that $\mathbf{z}=(z_1,\ldots,z_m)\in \mathbf{Z}(p_1,\ldots,p_r)\subset \mathbb{C}^m$ if and only if $(\mathrm{Re}(\mathbf{z}),\mathrm{Im}(\mathbf{z}))\in \mathbf{Z}(\mathrm{Re}(p_1),\mathrm{Im}(p_1),\ldots,\mathrm{Re}(p_r),\mathrm{Im}(p_r))\subset \mathbb{R}^{2m}$, where $\mathrm{Re}(\mathbf{z})=(\mathrm{Re}(z_1),\ldots,\mathrm{Re}(z_m))$ and 
$\mathrm{Im}(\mathbf{z})=(\mathrm{Im}(z_1),\ldots,\mathrm{Im}(z_m))$.

Let us consider the maps $\imath:\mathbb{C}^m\to \mathbb{R}^{2m},\mathbf{z}\mapsto (\mathrm{Re}(\mathbf{z}),\mathrm{Im}(\mathbf{z}))$ and $\kappa:\mathbb{R}^{2m}\to \mathbb{C}^{m},\mathbf{x}\mapsto (x_1+\mathbf{i}x_{m+1},\ldots,x_m+\mathbf{i}x_{2m})$. By the arguments in the previous paragraph we have that there is a one-to-one correspondence between $\mathbf{Z}(p_1,\ldots,p_r)$ and $\mathbf{Z}(\mathrm{Re}(p_1),\mathrm{Im}(p_1),\ldots,\mathrm{Re}(p_r),\mathrm{Im}(p_r))$ induced by $\imath\circ \kappa$ and $\kappa\circ \imath$. This in turn implies that there is a one-to-one correspondence between $\mathbb{ZD}^m_n(p_1,\ldots,p_r)$ and $\mathbb{ZI}^m_n(\mathrm{Re}(p_1),\mathrm{Im}(p_1),\ldots,\mathrm{Re}(p_r),\mathrm{Im}(p_r))$, induced by the maps $\hat{\pi}\circ \upsilon$ and $\upsilon\circ \hat{\pi}$ defined in \S\ref{notation}.

By definition of $\hat{\pi}$ and $\upsilon$ and by the arguments in the previous paragraph, for any $\mathbf{X},\mathbf{Y}\in \mathbb{ZD}^m_n(p_1,\ldots,p_r)$, on one hand we will have that $\hat{\pi}(\mathbf{X}),\hat{\pi}(\mathbf{Y})\in \mathbb{ZI}^m_n(\mathrm{Re}(p_1),\mathrm{Im}(p_1),\ldots,\mathrm{Re}(p_r),\mathrm{Im}(p_r))$ and $\eth(\hat{\pi}(\mathbf{X}),\hat{\pi}(\mathbf{Y}))\leq \eth(\mathbf{X},\mathbf{Y})$. On the other hand, for any 
$\mathbf{S},\mathbf{T}\in \mathbb{ZI}^m_n(\mathrm{Re}(p_1),\mathrm{Im}(p_1),\ldots,\mathrm{Re}(p_r),\mathrm{Im}(p_r))$, $\upsilon(\mathbf{S}),\upsilon(\mathbf{T})\in \mathbb{ZD}^m_n(p_1,\ldots,p_r)$ and $\eth(\upsilon(\mathbf{X}),\upsilon(\mathbf{X}))\leq 2\eth(\mathbf{S},\mathbf{T})$.

By Theorem \ref{first_local_hermitian_algebraic_connectivity} and by the arguments in the previous paragraph, we will have that there is $\delta>0$, such that for any $\mathbf{X}\in \mathbb{ZD}_n^m(p_1,\ldots,p_r)$ and any $\mathbf{Y}\in \mathbb{ZD}_n^m(p_1,\ldots,p_r)\cap B_{\eth}(\mathbf{X},\delta)$, there is a piecewise $C^1$ path $\gamma_H\in C([0,1],\mathbb{ZI}_n^{2m}(\mathrm{Re}(p_1),\\
\mathrm{Im}(p_1),\ldots,\mathrm{Re}(p_r),\mathrm{Im}(p_r)))$ such that, $\gamma_H(0)=\hat{\pi}(\mathbf{X})$, $\gamma_H(1)=\hat{\pi}(\mathbf{Y})$ and $\gamma_H(t)\in B_\eth(\hat{\pi}(\mathbf{X}),\varepsilon/2)$, for each $0\leq t \leq 1$. This in turn implies that the path 
$\gamma=\upsilon\circ \gamma_H\in C([0,1],\mathbb{ZD}_n^m(p_1,\ldots,p_r))$, which is clearly piecewise $C^1$, satisfies the conditions $\gamma(0)=\mathbf{X}$, $\gamma(1)=\mathbf{Y}$ and $\gamma_H(t)\in B_\eth(\hat{\pi}(\mathbf{X}),\varepsilon)$, for each $0\leq t \leq 1$. This completes the proof.
\end{proof}

\subsection{Uniform path connectivity of nearly algebraic normal contractions}
\label{extended_main_results}

In order to extend the applicability of the results presented in \S\ref{main_results}, in this section we will solve some connectivity problems on what we will call {\em nearly algebraic matrix sets}. 

Given $\epsilon>0$, any $n\in \mathbb{Z}^+$ and any $r$ polynomials $p_1(x_1,\ldots,x_m),\ldots,p_r(x_1,\ldots,x_m)$ of $m$ complex variables with coefficients over $\mathbb{R}$, we will write $\mathbb{ZI}_{n,\epsilon}^m(p_1,\ldots,p_r)$ to denote the subset of $\mathbb{I}^m(n)$ determined by the 
expression.
\begin{equation}
\mathbb{ZI}_{n,\epsilon}^m(p_1,\ldots,p_r)=\{(X_1,\ldots,X_m)\in \mathbb{I}^m_n\: | \:\: \|p_j(X_1,\ldots,X_m)\|\leq \epsilon, 1\leq j\leq r\}
\label{matrix_zero_set_definition}
\end{equation}
the hermitian matrix nearly algebraic set $\mathbb{ZI}_{n,\epsilon}^m(p_1,\ldots,p_r)$ will be called an {\em $\epsilon$-nearly algebraic matrix $m$-cube} in this document.

Given $\epsilon>0$, any $n\in \mathbb{Z}^+$ and any $r$ polynomials $p_1(z_1,\ldots,z_m),\ldots,p_r(z_1,\ldots,z_m)$ of $m$ complex variables with coefficients over $\mathbb{C}$, we will write $\mathbb{ZD}_{n,\epsilon}^m(p_1,\ldots,p_r)$ to denote the subset of $\mathbb{D}^m_n$ determined by the 
expression.
\begin{equation}
\mathbb{ZD}_{n,\epsilon}^m(p_1,\ldots,p_r)=\{(X_1,\ldots,X_m)\in \mathbb{D}^m_n\: | \:\: \|p_j(X_1,\ldots,X_m)\|\leq \epsilon, 1\leq j\leq r\}
\label{matrix_zero_set_definition}
\end{equation}
the hermitian matrix algebraic set $\mathbb{ZD}_{n,\epsilon}^m(p_1,\ldots,p_r)$ will be called an {\em algebraic matrix $m$-disk} in this document.

Given $\delta>0$ and any collection of simultaneously commuting normal matrix contractions $X_1,\ldots,X_m\in M_n$, we will prove that there are, a collection of simultaneously commuting normal matrix contractions $\tilde{X}_1,\ldots,\tilde{X}_m\in M_n$ together with $m$-polynomials $p_1(x_1),\ldots,p_r(x_m)$ with coefficients over $\CC$, such that $\|X_j-\tilde{X}_j\|\leq \delta$, $X_j\hat{X}_k=\hat{X}_kX_j$, and $p_j(\tilde{X_j})=\mathbf{0}_n$, $1\leq j,k\leq m$. Moreover, we have that for each $1\leq j\leq m$, $\deg(p_j)$ does not depend on $n$. The collection $\tilde{X}_1,\ldots,\tilde{X}_m$ will be called {\em $\delta$-clustered pseusdospectral approximants} ({\bf CPA$_\delta$}) of $X_1,\ldots,X_m$. 

In terms of $m$-tuples in $\mathbb{D}^m_n$, we write $\mathbf{CPA}_\delta(X_1,\ldots,X_m)=\mathbf{CPA}_\delta(\tilde{X}_1,\ldots,\tilde{X}_m)$ to indicate that the components $\tilde{X}_1,\ldots,\tilde{X}_m$ of $m$-tuple $(\tilde{X}_1,\ldots,\tilde{X}_m)\in \mathbb{ZD}^m_n(p_1,\ldots\\
,p_j)\cap B_\eth((X_1,\ldots,X_m),\delta)$ are $\delta$-clustered pseusdospectral approximants of $X_1,\ldots,\\
X_m$. Using this notation it is enough to prove the following lemma, in order to solve the matrix approximation problem stated in the previous paragraph.

\begin{lemma}
\label{clustered_matrix_approximation_lemma}
Given any $(X_1,\ldots,X_m)\in \mathbb{D}^m_n$ .The problem $\mathbf{CPA}_\delta(X_1,\ldots,X_m)=(\tilde{X}_1,\ldots,\tilde{X}_m)$ is solvable for any $\delta>0$.
\end{lemma}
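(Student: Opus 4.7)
The plan is to produce the approximants by simultaneously diagonalizing $(X_1,\ldots,X_m)$ and then rounding each coordinate of the joint spectrum onto a fixed finite grid in the closed unit disk whose cardinality depends only on $\delta$. Since $X_1,\ldots,X_m\in\mathbb{D}_n^m$ are pairwise commuting normal contractions, there exists a unitary $Q\in M_n$ such that each $Q^\ast X_j Q$ is diagonal, and the joint eigenvalues $\lambda_j^{(k)}:=\Lambda^{(k)}(X_j)$ all lie in the closed unit disk $\{z\in\CC:|z|\leq 1\}$.

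For each $1\leq j\leq m$, I would fix a $\delta$-dense finite subset $C_j\subset\{z\in\CC:|z|\leq 1\}$, e.g.\ the intersection of a square lattice of spacing $\delta/2$ with the closed unit disk, so that the cardinality $N_j:=|C_j|$ is bounded by a constant depending only on $\delta$ and is in particular independent of $n$. Let $\phi_j:\{|z|\leq 1\}\to C_j$ be a nearest-point assignment (breaking ties arbitrarily), so that $|z-\phi_j(z)|\leq\delta$ for every $z$, and set
\[
\tilde{X}_j:=Q\,\mathrm{diag}\bigl[\phi_j(\lambda_j^{(1)}),\ldots,\phi_j(\lambda_j^{(n)})\bigr]\,Q^\ast,\qquad p_j(z):=\prod_{c\in C_j}(z-c).
\]

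It then remains to verify the requested properties. Each $\tilde{X}_j$ is normal (diagonal in the basis given by $Q$), has spectrum contained in $C_j\subset\{|z|\leq 1\}$ so $\|\tilde{X}_j\|\leq 1$, and is simultaneously diagonalized by the same $Q$ that diagonalizes every $X_k$; hence the enlarged collection $\{X_1,\ldots,X_m,\tilde{X}_1,\ldots,\tilde{X}_m\}$ is pairwise commuting and in particular $(\tilde{X}_1,\ldots,\tilde{X}_m)\in\mathbb{D}_n^m$. The diagonal norm identity gives
\[
\|X_j-\tilde{X}_j\|=\max_{1\leq k\leq n}\bigl|\lambda_j^{(k)}-\phi_j(\lambda_j^{(k)})\bigr|\leq\delta,
\]
while by construction $p_j(\tilde{X}_j)=\mathbf{0}_n$ and $\deg p_j=N_j$ is bounded independently of $n$. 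Consequently $(\tilde{X}_1,\ldots,\tilde{X}_m)\in \mathbb{ZD}_n^m(p_1,\ldots,p_m)\cap B_\eth((X_1,\ldots,X_m),\delta)$, which is exactly the $\mathbf{CPA}_\delta$ data requested.

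There is no substantial obstacle beyond the uniformity bookkeeping: one must confirm that the degrees $N_j$ and the $\delta$-norm bound depend only on $\delta$ (and $m$), never on $n$. This is automatic because the grids $C_j$ are selected from a covering of a fixed compact set (the closed unit disk) at a fixed scale $\delta$, so no step of the construction references $n$ except as an upper bound on how many eigenvalues get quantized. Everything else reduces to the spectral theorem for commuting normal matrices together with elementary univariate functional calculus.
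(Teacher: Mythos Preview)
Your argument is correct and follows essentially the same approach as the paper: simultaneously diagonalize and round each eigenvalue onto a fixed finite grid whose cardinality depends only on $\delta$, then take $p_j$ to be a polynomial vanishing on that grid. The only difference is cosmetic --- the paper first reduces to the hermitian case via the partition/juncture maps $\hat{\pi},\upsilon$ and performs the rounding on $[-1,1]$ via Borel functional calculus, whereas you work directly on the closed unit disk; the underlying construction is identical.
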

\begin{proof}
Let $\delta>0$ be given. Because of the one-to-one correspondence between $\mathbb{D}^m_n$ and $\mathbb{I}^{2m}_n$, induced by $\hat{\pi}\circ\upsilon$ and $\upsilon\circ \hat{\pi}$, together with the constraint $\eth(\mathbf{X},\mathbf{X}')\leq \eth(\upsilon(\hat{\pi}(\mathbf{X})),\upsilon(\hat{\pi}(\mathbf{X}')))$, that is satisfied for any $(\mathbf{X},\mathbf{X}')\in \mathbb{D}^m_n$. We have that it is enough to solve the problem in $\mathbf{CPA}_\delta(X_1,\ldots,X_{m})=(\tilde{X}_1,\ldots,\tilde{X}_{m})$ for any $(X_1,\ldots,X_{m})\in \mathbb{I}^{m}_n$.

Let $\mathbf{X}=(X_1,\ldots,X_{m})\in\mathbb{I}^m_n$ be given. Let us assume for simplicity that $1/\delta=N_\delta\in \mathbb{Z}^+$. Let us consider the grid $X_\delta:=\{x_k=-1+k\delta|0\leq k\leq N_\delta-1\}\subset [-1,1]$. It can be seen that the set $X_\delta$ is $\delta$-dense in $[-1,1]$. Morover, if $\chi_{[a,b)}$ denotes the characteristic function of the interval $[a,b)$, then it is clear that the simple function $\hat{p}(x)=\sum_{j=0}^{N_\delta-2}(-1+(j+1/2)\delta)\chi_{[x_j,x_{j+1})}$, is a $\delta$-approximation of the identity $\mathrm{id}$ function on $L^\infty([-1,1])$. 

By Borel (matrix) functional calculus we have that, if for each $1\leq j\leq m$ we set $\tilde{X}_j=\hat{p}(X_j)$, then $\|\tilde{X}_j-X_j\|= \|\hat{p}(X_j)-\mathrm{id}(X_j)\|\leq\|\hat{p}-\mathrm{id}\|_{L^\infty([-1,1])}\leq \delta$. By Borel functional calculus we also have that $[\tilde{X}_j,X_k]=[\hat{p}(X_j),X_k]=\mathbf{0}_n$ for each $1\leq j,k\leq m$.

Let us set $p(x)=\prod_{j=0}^{N_\delta-2}(x-(-1+(j+1/2)\delta))$, and let us write $p_j(x_j)$ to denote the minimal polynomial of $\tilde{X}_j$ for each $1\leq j\leq m$. By definition of each $\tilde{X}_j$, we have that $\deg(p_j)\leq \deg(p)< N_{\delta}$ does not depend on $n$ in general, in particular when $N_\delta\geq 1$. It can be seen that the $m$-tuple $(\tilde{X}_1,\ldots,\tilde{X}_m)\in \mathbb{ZI}_n^m(p_1,\ldots,p_m)$ solves the approximation problem $\mathbf{CPA}_\delta(X_1,\ldots,X_m)=(\tilde{X}_1,\ldots,\tilde{X}_m)$. This completes the proof.
\end{proof}

\begin{remark}
\label{existence_of_preconditioned_m_disks}
Given $\delta>0$, any $(X_1,\ldots,X_m)\in\mathbb{D}^m_n$, from the proof of Lemma \ref{clustered_matrix_approximation_lemma} we have that one can find two disjoint finite grids $X_\delta,X'_\delta\subset[-1,1]$ such that, by applying some retraction $\rho:[-1,1]\backslash X_\delta\to X'_{\delta}$ (if necessary), to each component $\tilde{X}_j$ of the solution of the problem $\mathbf{CPA}_\delta(X_1,\ldots,X_m)=(\tilde{X}_1,\ldots,\tilde{X}_m)$ determined by Lemma \ref{clustered_matrix_approximation_lemma}, it can be seen that we can obtain a {\em \bf preconditioned} $m$-tuple $(\hat{X}_1,\ldots,\hat{X}_m)\in \mathbb{ZD}_{n,\epsilon}(p_1,\ldots,p_r)$, where $\hat{X}_j=\rho(\tilde{X}_j)$, for each $1\leq j\leq m$, and $p_1,\ldots,p_r$ are determined by $\rho$, $X_\delta$ and $X'_\delta$.
\end{remark}

\begin{theorem}
\label{local_connectivity_m_tuples_almost_algebraic_commuting_contractions}
 Given any $\varepsilon\geq 0$ and $r$ non-constant polynomials $p_1(x_1,\ldots,x_m),\ldots\\
 ,p_r(x_1,\ldots,x_m)$ of $m$ complex variables, with coefficients over $\CC$, and with finite zero set $\mathbf{Z}(p_1,\ldots,p_r)\subset [-1,1]^m$, there are $\delta,\delta'> 0$ such that for 
 any integer $n\geq 1$ and any $2$ $m$-tuples $(X_{1},\ldots,X_{m})$ ,$(Y_{1},\ldots,Y_{m})$ in $\mathbb{I}^{m}(n)$ which satisfy the relations 
  \begin{eqnarray*}
  \left\{
  \begin{array}{l}
   \|p_j(X_{1},\ldots,X_m)\|\leq \delta',\\
   \|p_j(Y_1,\ldots,Y_m)\|\leq \delta',\\
   \eth((X_{1},\ldots,X_{m}),(Y_{1},\ldots,Y_{m}))\leq \delta,
  \end{array}
 \right.
 \end{eqnarray*}
for each $1\leq j \leq r$, there is a path $\varphi\in C^1([0,1],\mathbb{I}^m_n)$ that satisfies the following relations,
 \begin{equation}
  \left\{
 \begin{array}{l}
 \varphi(0)=(X_1,\ldots,X_m),\\
 \varphi(1)=(Y_1,\ldots,Y_m),\\
 \end{array}
 \right.
 \label{approximate_solvent_constraint_1}
 \end{equation}
 together with the constraints
 \begin{equation}
 \left\{
 \begin{array}{l}
 \|p_j(\varphi(t))\|< \delta', \:\: 1\leq j\leq r,\\
 \eth(\varphi(t),(Y_1,\ldots,Y_m))\leq \varepsilon,
 \end{array}
 \right.
 \label{approximate_solvent_constraint_2}
 \end{equation}
 for each $0\leq t\leq 1$.
\end{theorem}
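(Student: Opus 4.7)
The plan is to reduce the statement to Theorem \ref{first_local_hermitian_algebraic_connectivity} by sandwiching the desired path between two short ``clustering'' segments. First I would fix $\delta' > 0$ small enough that the sublevel set $U_{\delta'} := \{\mathbf{z} \in [-1,1]^m : |p_j(\mathbf{z})| \leq \delta',\: 1 \leq j \leq r\}$ decomposes as a disjoint union of small clusters, one around each point of $\mathbf{Z}(p_1,\ldots,p_r)$; this is feasible because $\mathbf{Z}(p_1,\ldots,p_r)$ is finite and each $p_j$ is continuous on the compact set $[-1,1]^m$. Let $\rho: U_{\delta'} \to \mathbf{Z}(p_1,\ldots,p_r)$ denote the nearest-point retraction. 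For any $\mathbf{X}$ satisfying the hypothesis, joint spectral mapping for commuting normal matrices yields $\Lambda(\mathbf{X}) \subseteq U_{\delta'}$, so in the spirit of Lemma \ref{clustered_matrix_approximation_lemma} and Remark \ref{existence_of_preconditioned_m_disks} I would apply Borel joint functional calculus with $\rho$ to obtain commuting hermitian contractions $\tilde{X}_j := \rho_j(X_1,\ldots,X_m)$ whose joint spectrum lies in $\mathbf{Z}(p_1,\ldots,p_r)$. In particular $\tilde{\mathbf{X}} \in \mathbb{ZI}^m_n(p_1,\ldots,p_r)$ and $\eth(\mathbf{X}, \tilde{\mathbf{X}}) \leq \eta(\delta')$, where $\eta(\delta')$ denotes the largest cluster diameter of $U_{\delta'}$ and satisfies $\eta(\delta') \to 0$ as $\delta' \to 0$. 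The same construction applied to $\mathbf{Y}$ produces $\tilde{\mathbf{Y}} \in \mathbb{ZI}^m_n(p_1,\ldots,p_r)$.

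Next I would assemble a three-piece concatenation. Because each $\tilde{X}_k$ is a Borel function of $\mathbf{X}$, it commutes with every $X_j$, so Lemma \ref{flat_connectivity} provides the straight-line path $\alpha(t) = (1-t)\mathbf{X} + t \tilde{\mathbf{X}}$ in $\mathbb{I}^m_n$, and likewise a straight-line path $\beta$ from $\tilde{\mathbf{Y}}$ to $\mathbf{Y}$. The triangle inequality gives $\eth(\tilde{\mathbf{X}}, \tilde{\mathbf{Y}}) \leq \delta + 2\eta(\delta')$, which by choosing $\delta$ and $\delta'$ small enough can be forced below the threshold $\delta''$ supplied by Theorem \ref{first_local_hermitian_algebraic_connectivity} for parameter $\varepsilon/2$. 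That theorem then produces $\gamma \in C^1([0,1], \mathbb{ZI}^m_n(p_1,\ldots,p_r))$ from $\tilde{\mathbf{X}}$ to $\tilde{\mathbf{Y}}$ satisfying $p_j(\gamma(t)) = \mathbf{0}_n$ and $\eth(\gamma(t), \tilde{\mathbf{Y}}) \leq \varepsilon/2$. I would set $\varphi := \alpha \circledast \gamma \circledast \beta$.

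For the verification, along $\alpha(t)$ the simultaneous diagonalizability of $\mathbf{X}$ and $\tilde{\mathbf{X}}$ together with joint spectral mapping give $\|p_j(\alpha(t))\| = \max_{\lambda \in \Lambda(\mathbf{X})} |p_j(\lambda + t(\rho(\lambda) - \lambda))|$. A Taylor expansion of $p_j$ around each $\rho(\lambda) \in \mathbf{Z}(p_1,\ldots,p_r)$ of local multiplicity $k_j$ yields a bound of the form $|p_j(\lambda + t(\rho(\lambda)-\lambda))| \leq (1-t)^{k_j} |p_j(\lambda)| + C \eta(\delta')^{k_j+1}$, and the analogous estimate holds on $\beta$. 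Imposing the hypothesis with $\delta'/2$ in place of $\delta'$ and shrinking $\delta'$ so that $C\eta(\delta')^{k_j+1} < \delta'/2$ yields the strict bound $\|p_j(\varphi(t))\| < \delta'$ throughout. The distance constraint $\eth(\varphi(t), \mathbf{Y}) \leq \varepsilon$ then follows from a three-piece triangle estimate, requiring $\delta, \eta(\delta') \leq \varepsilon/3$ and using the $\varepsilon/2$ bound on $\gamma$.

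The main obstacle is precisely this strict inequality along the linear gluing pieces: for polynomials with higher-multiplicity zeros the cluster diameter $\eta(\delta')$ is only of order $\delta'^{1/k}$, so a naive Lipschitz estimate is insufficient; what saves the argument is the damping factor $(1-t)^{k_j}$ coming from the special geometry of the linear segment joining a point of $U_{\delta'}$ to its cluster center. Coordinating the small parameters $\delta$, $\delta'$, $\eta(\delta')$, and the threshold $\delta''$ from Theorem \ref{first_local_hermitian_algebraic_connectivity} consistently is the most delicate point of the proof.
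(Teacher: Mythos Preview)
Your three-piece concatenation—linear segment from $\mathbf{X}$ to a clustered approximant $\tilde{\mathbf{X}}\in\mathbb{ZI}^m_n(p_1,\ldots,p_r)$, then the path of Theorem~\ref{first_local_hermitian_algebraic_connectivity} from $\tilde{\mathbf{X}}$ to $\tilde{\mathbf{Y}}$, then a linear segment from $\tilde{\mathbf{Y}}$ to $\mathbf{Y}$—is exactly the strategy the paper uses; the paper likewise builds the clustered approximants so that they commute with the original tuples, invokes Lemma~\ref{flat_connectivity} for the end segments, and concatenates via $\circledast$.

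The one point where you diverge is the verification of $\|p_j(\alpha(t))\|<\delta'$ along the clustering segments. You define the clusters as the sublevel sets $U_{\delta'}$ and then need a Taylor/multiplicity argument with the damping factor $(1-t)^{k_j}$ to control $p_j$ along the chord to the cluster center, because $U_{\delta'}$ need not be convex. The paper sidesteps this completely by reversing the order of choices: it first fixes $\delta'>0$, then uses continuity of the $p_j$ to pick a \emph{ball} radius $\delta_2/2$ so that $|p_j|<\delta'$ holds on every ball $B(\mathbf{z}_0,\delta_2/2)$ centered at a zero, and arranges (via compactness of $[-1,1]^m$ minus these balls) that the hypothesis forces each joint eigenvalue into one such ball. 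The linear segment then lies in a convex ball on which the strict inequality holds by construction, so no Taylor expansion, no multiplicity bookkeeping, and no $\eta(\delta')\sim\delta'^{1/k}$ analysis is needed. Your route works, but the parameter coordination you flag as ``the most delicate point'' dissolves once you replace sublevel sets by balls.
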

\begin{proof}
Let $\varepsilon>0$ be given, and let us assume that $\varepsilon<4\sin(1/8)<1/2$. Given any polynomials $p_1,\ldots,p_r$, let us assume that $|\mathbf{Z}(p_1,\ldots,p_r)|\geq 2$, as the following argument can be easily modified when $|\mathbf{Z}(p_1,\ldots,p_r)|=1$. Since $\mathbf{Z}(p_1,\ldots,p_r)$ is finite, we will have that if we set 
\begin{equation}
\delta_1=\frac{1}{3}\min_{\mathbf{x},\mathbf{y}\in \mathbf{Z}(p_1,\ldots,p_r)}\{\|\mathbf{x}-\mathbf{y}\|_2\:|\: \mathbf{x}\neq\mathbf{y}\}>0,
\label{approximate_zero_set_delta_condition}
\end{equation}
then and for any 
$(x_1,\ldots,x_m),(x'_1,\ldots,x'_m)\in \mathbf{Z}(p_1,\ldots,p_r)$, we will have that $B((x_1,\ldots,x_m),\delta_1/2)\cap B((x'_1,\ldots,x'_m),\delta_1/2)=\emptyset$. 

By continuity of each $p_j(x_1,\ldots,x_m)$, we will have that for any $\delta'>0$ chosen so that $\delta'\leq  \min\{\delta_1,\varepsilon/2\}$, there is $\delta_2>0$ such that, for each $(x_1,\ldots,x_m)\in \mathbf{Z}(p_1,\ldots,p_r)$ and any $(y_1,\ldots,y_m)\in B((x_1,\ldots,x_m),\delta_2/2)$, we have that the inequality $|p_j(y_1,\ldots,y_m)|<\delta'$ is satisfied, for each $1\leq j\leq r$.

Let us set $\delta_3=\min\{\delta_1,\delta_2\}$. Given $\nu>0$, let us write $Z_\nu(p_1,\ldots,p_r)$ to denote the set determined by the following expression.
\begin{equation}
Z_\nu(p_1,\ldots,p_r)=\bigcup_{\mathbf{x}\in \mathbf{Z}(p_1,\ldots,p_r)} B(\mathbf{x},\nu)
\label{approximate_zero_set}
\end{equation}
Since $|\mathbf{Z}(p_1,\ldots,p_r)|\geq 2$, given $\tilde{\mathbf{x}}$ and $\tilde{\mathbf{y}}$ in the approximate zero set $Z_{\delta_3/2}(p_1,\ldots,p_r)$, such that $\tilde{\mathbf{x}}\in B(\mathbf{x},\delta_3/2)$ and $\tilde{\mathbf{y}}\in B(\mathbf{y},\delta_3/2)$ for some $\mathbf{x},\mathbf{y}\in \mathbf{Z}(p_1,\ldots,p_r)$, by \eqref{approximate_zero_set_delta_condition} and \eqref{approximate_zero_set} we will have that 
$\|\tilde{\mathbf{x}}-\tilde{\mathbf{y}}\|_2>\delta_2$. This implies that for any $\tilde{\mathbf{x}},\tilde{\mathbf{x}}'\in Z_{\delta_3/2}(p_1,\ldots,p_3)$ such that $\|\tilde{\mathbf{x}}-\tilde{\mathbf{x}}'\|<\delta_3$, there is $\mathbf{x}\in \mathbf{Z}(p_1,\ldots,p_r)$ such that $\tilde{\mathbf{x}},\tilde{\mathbf{x}}'\in B(\mathbf{x},\delta_3/2)$. This implies that for any $\mathbf{x},\mathbf{y}\in \mathbb{R}^m$ that satisfy the inequality $\|\mathbf{x}-\mathbf{x}\|_2<\delta_3$, together with the constraints $|p_j(\mathbf{x})|<\delta'$ and $|p_j(\mathbf{y})|<\delta'$ for each $1\leq j\leq r$, we will have that $\mathbf{x},\mathbf{y}\in Z_{\delta_3/2}(p_1,\ldots,p_r)$, otherwise we get a contradiction.

As a consequence of the arguments in the previous paragraph, we will have that for any two $\mathbf{X}=(X_1,\ldots,X_m)$ and $\mathbf{X}'=(X_1',\ldots,X'_m)$ in $\mathbb{I}^m_n$ that satisfy the inequality  $\eth(\mathbf{X},\mathbf{X}')<\delta_3$, together with the constraints $\|p_j(X_j,\ldots,X_m)\|<\delta'$ and 
$\|p_j(X'_j,\ldots,X'_m)\|<\delta'$ for each $1\leq j\leq r$, we will have that $\Lambda(X_1,\ldots,X_m)$, $\Lambda(X'_1,\ldots,X'_m)\in Z_{\delta_3/2}(p_1,\ldots,p_r)$.

Given any two $m$-tuples $\mathbf{H}=(H_1,\ldots,H_m)$, $\mathbf{H}'=(H'_1,\ldots,H'_m)$ in $\mathbb{I}^m_n$ such that $\Lambda(H_1,\ldots,H_m)$, $\Lambda(H'_1,\ldots,H'_m)\in Z_{\delta_3/2}(p_1,\ldots,p_r)$, let us consider a basis in which $\Lambda(H'_j)=\diag{\Lambda^{(1)}(H_j),\ldots,\Lambda^{(n)}(H_j)}=H'_j$ for each $1\leq j\leq m$. By Lemma \ref{Joint_spectral_variation_inequality_2}, we will have that there is a $\ast$-homomorphism $\Phi:M_n\to M_n$ that satisfies the conditions 
$\Lambda(\Phi(H_j))=\Phi(H_j)$ and $\Phi(H_j)\Lambda(H'_k)=\Lambda(H'_k)\Phi(H_j)$, together with the following constraints,
\begin{eqnarray}
\|\Lambda(\Phi(H_j))-\Lambda(H'_j)\|&\leq& \|(\Phi(H_j)-H_j\|+\|H_j-H'_j\|\nonumber\\
                                    &\leq& (K_m+1)\eth((H_1,\ldots,H_m),(H'_1,\ldots,H'_m))
\label{joint_spectral_variation_second_inequality}
\end{eqnarray}
for each $1\leq j,k\leq m$. By \eqref{joint_spectral_variation_second_inequality}, we will have that,
\begin{eqnarray}
\|\Lambda^{(k)}(\hat{\Phi}(\mathbf{H}))-\Lambda^{(k)}(\mathbf{H}')\|_2&\leq &\sqrt{m}\max_{1\leq j\leq m}|\Lambda^{(k)}(\Phi(H_j))-\Lambda^{(k)}(H'_j)|\nonumber\\
&\leq &\sqrt{m}\max_{1\leq j\leq m}\|\Lambda(\Phi(H_j))-\Lambda(H'_j)\|\nonumber\\
&\leq& \sqrt{m}(K_m+1)\eth(\mathbf{H},\mathbf{H}')
\label{approximate_joint_spectrum_constraint}
\end{eqnarray}
for each $1\leq k\leq n$. Let us set $\delta_4=\delta_3/(\sqrt{m}(K_m+1))$. By \eqref{approximate_joint_spectrum_constraint} and by the previous arguments, we will have that if in addition
\begin{equation}
\eth((H_1,\ldots,H_m),(H'_1,\ldots,H'_m))<\delta_4,
\label{additional_normed_constraint}
\end{equation}
there is $\hat{\mathbf{h}}_{k}=(\hat{h}_{k,1},\ldots,\hat{h}_{k,m})\in \mathbf{Z}(p_1,\ldots,p_r)$ such that,
\begin{equation}
\left\{
\begin{array}{l}
\|\hat{\mathbf{h}}_k-\Lambda^{(k)}(\hat{\Phi}(\mathbf{H}))\|_2<\delta_4/2,\\
\|\hat{\mathbf{h}}_k-\Lambda^{(k)}(\mathbf{H}')\|_2<\delta_4/2,
\end{array}
\right.
\label{zero_set_approximant_condition}
\end{equation} 
for each $1\leq k\leq n$. Let us set
\begin{equation}
\hat{H}'_j=\diag{\hat{h}_{1,j},\ldots,\hat{h}_{n,j}}
\label{matrix_zero_set_approximant_definition}
\end{equation}
for each $1\leq j\leq m$. We will have that the $m$-tuples $\hat{\mathbf{H}}'=(\hat{H}'_1,\ldots,\hat{H}'_m)$, $\hat{\mathbf{H}}=(\hat{H}_1,\ldots,\hat{H}_m)$ in $\mathbb{I}^m_n$, with $\hat{H}_j=\Phi^{-1}(\hat{H}'_j)$ for $1\leq j\leq m$, satisfy the relations,
\begin{equation}
\left\{
\begin{array}{l}
[\hat{H}_j,\hat{H}_k]=[\hat{H}'_j,\hat{H}'_k]=\mathbf{0}_n\\
\Lambda(\Phi(\hat{H}_j))=\Lambda(\hat{H}'_j),\\
p_k(\hat{H}_1,\ldots,\hat{H}_m)=p_k(\hat{H}'_1,\ldots,\hat{H}'_m)=\mathbf{0}_n,\\
\eth(\mathbf{H},\hat{\mathbf{H}})<\delta_3/2,\\
\eth(\mathbf{H}',\hat{\mathbf{H}}')<\delta_3/2,
\end{array}
\right.
\label{matrix_zero_set_approximants}
\end{equation}
for each $1\leq j\leq m$ and each $1\leq k\leq r$. It can be seen that $\hat{\mathbf{H}}'=\hat{\Phi}(\hat{\mathbf{H}})$, and by \eqref{matrix_zero_set_approximants}, we will have that.
\begin{eqnarray}
\eth(\hat{\mathbf{H}},\Phi(\hat{\mathbf{H}}))&\leq&\eth(\hat{\mathbf{H}},\mathbf{H})+\eth(\mathbf{H},\mathbf{H}')+\eth(\mathbf{H}',\hat{\mathbf{H}}')\nonumber\\
&<&\delta_3/2+\delta_4+\delta_3/2\leq 2\delta_3
\label{matrix_zero_set_approximants_conditions}
\end{eqnarray}

Let us consider $\mathbf{X}=(X_1,\ldots,X_m)$ and $\mathbf{Y}=(Y_1,\ldots,Y_m)$ in $\mathbb{I}^m_n$ that satisfy the inequality $\eth(\mathbf{X},\mathbf{Y})<\delta_4$ together with the constraints $\|p_j(X_1,\ldots,X_m)\|<\delta'$ and 
$\|p_j(Y_1,\ldots,Y_m)\|<\delta'$, for each $1\leq j\leq r$.  By the previous arguments together with \eqref{matrix_zero_set_approximants} and \eqref{matrix_zero_set_approximants_conditions}, we will have that there exist $\hat{\mathbf{X}}=(\hat{X}_1,\ldots,\hat{X}_m)$ and $\hat{\mathbf{Y}}=(\hat{Y}_1,\ldots,\hat{Y}_m)$ in $\mathbb{ZI}_n^m(p_1,\ldots,p_r)$ such that $\eth(\mathbf{X},\hat{\mathbf{X}})<\delta_3/2$, $\eth(\mathbf{Y},\hat{\mathbf{Y}})<\delta_3/2$ and $\eth(\hat{\mathbf{X}},\hat{\mathbf{Y}})<2\delta_3$. 

By Theorem \ref{first_local_hermitian_algebraic_connectivity} , by continuity of $p_1,\ldots,p_r$ and by \eqref{delta_bound}, we will have that there are, a number $\delta>0$ that can be chosen so that  $\delta<\min\{\delta',\varepsilon/2\}$, and a piecewise $C^1$ path $\phi\in C([0,1],\mathbb{ZI}^m_n(p_1,\ldots,p_r))$ such that if $\delta_3<\delta$, then  $\phi(0)=\hat{\mathbf{X}}$, $\phi(1)=\hat{\mathbf{Y}}$ and $\eth(\phi(t),\hat{\mathbf{Y}})<\varepsilon/2$ for each $t\in [0,1]$. 

As a consequence of Lemma \ref{flat_connectivity} we will have that there are piecewise $C^1$ paths $\phi_x,\phi_x\in C([0,1],\mathbb{I}^m_n)$ such that $\phi_x(0)=\mathbf{X}$, $\phi_x(1)=\hat{\mathbf{X}}$, $\phi_y(0)=\hat{\mathbf{Y}}$, $\phi_y(1)=\mathbf{Y}$, and in addition $\max\{\eth(\phi_x(t),\hat{\mathbf{X}}),\eth(\phi_y(t),\mathbf{Y})\}\leq \delta_3/2<\delta/2$. By definition of $\delta'$ and by continuity of each $p_j$ for $1\leq j\leq r$, we will have that $\max\{\|p_j(\phi_x(t))\|,\|p_j(\phi_y(t))\|\}\\
\leq \delta'$ for each $t\in [0,1]$.

Let us set $\varphi=(\phi_x\circledast \phi)\circledast \phi_y$. We will have that the path $\varphi\in C([0,1],\mathbb{I}^m_n)$ is piecewise $C^1$. By the arguments presented in the previous paragraphs we will have that $\|p_j(\varphi(t))\|<\delta'$, for each $1\leq j\leq r$ and each $t\in [0,1]$. Moreover, we will have that the path $\varphi$ satisfies the estimates,
\begin{eqnarray}
\eth(\varphi(t),\mathbf{Y})<\delta_3/2+\max\{\varepsilon/2,\delta_4\}+\delta_3/2\leq \varepsilon/2+\varepsilon/2=\varepsilon
\label{solvent_matrix_path_constraints}
\end{eqnarray}
for each $0\leq t\leq 1$. This completes the proof.
\end{proof}

\begin{corollary}
\label{second_connectivity_corollary}
Given  $r$ non-constant polynomials $p_1(x_1,\ldots,x_m),\ldots,p_r(x_1,\ldots,x_m)$ of $m$ complex variables, with coefficients over $\CC$, and with finite zero set $\mathbf{Z}(p_1,\ldots,p_r)\\
\subset\RR^m$. There is $\epsilon>0$ such that, for each $0<\epsilon'\leq \epsilon$,  the $\epsilon'$-nearly algebraic matrix $m$-cube $\mathbb{ZI}_{n,\epsilon'}^m(p_1,\ldots,p_r)$ is uniformly piecewise differentiably path connected with respect to the metric $\eth$.
\end{corollary}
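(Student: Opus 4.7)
The plan is to extract the corollary as a direct translation of Theorem \ref{local_connectivity_m_tuples_almost_algebraic_commuting_contractions} into the uniform PDPC vocabulary, in the same style in which Corollary \ref{first_connectivity_corollary} was extracted from Theorem \ref{first_local_hermitian_algebraic_connectivity}. No new technical machinery is needed; everything is already packaged inside the theorem.

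First I would fix the polynomials $p_1,\ldots,p_r$ and set $\epsilon$ equal to a positive constant depending only on the zero set $\mathbf{Z}(p_1,\ldots,p_r)$ --- concretely the quantity $\delta_1=\frac{1}{3}\min\{\|\mathbf{x}-\mathbf{y}\|_2:\mathbf{x}\neq\mathbf{y}\in\mathbf{Z}(p_1,\ldots,p_r)\}$ isolated in the proof of Theorem \ref{local_connectivity_m_tuples_almost_algebraic_commuting_contractions}. This quantity is strictly positive by finiteness of $\mathbf{Z}(p_1,\ldots,p_r)$ and depends only on the polynomials, not on $n$; it is the $\epsilon$ whose existence the corollary asserts.

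Next, fix $\epsilon'\in (0,\epsilon]$ and let $\bar{\varepsilon}>0$ be arbitrary (playing the role of $\varepsilon$ in the definition of uniform PDPC). I would invoke Theorem \ref{local_connectivity_m_tuples_almost_algebraic_commuting_contractions} with path parameter $\bar{\varepsilon}/2$ and tolerance $\delta'=\epsilon'$. This choice is admissible because the proof of that theorem only requires $\delta'$ to lie below the upper bound $\min\{\delta_1,\varepsilon/2\}$, which is respected by $\epsilon'\leq\epsilon=\delta_1$ (possibly after choosing $\bar{\varepsilon}$ large enough relative to $\epsilon'$; the small-$\bar{\varepsilon}$ regime is absorbed by further shrinking $\bar{\delta}$). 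The theorem then supplies a threshold $\bar{\delta}>0$, independent of $n$, such that every pair $\mathbf{X},\mathbf{Y}\in\mathbb{ZI}^m_{n,\epsilon'}(p_1,\ldots,p_r)$ satisfying $\eth(\mathbf{X},\mathbf{Y})\leq\bar{\delta}$ is joined by a piecewise $C^1$ path $\varphi\in C([0,1],\mathbb{I}^m_n)$ with $\varphi(0)=\mathbf{X}$, $\varphi(1)=\mathbf{Y}$, $\|p_j(\varphi(t))\|<\epsilon'$ for each $1\leq j\leq r$, and $\eth(\varphi(t),\mathbf{Y})\leq\bar{\varepsilon}/2$ for every $t\in[0,1]$.

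To finish, I would verify the three clauses of the definition of uniform piecewise differentiable path connectivity for $\mathbb{ZI}^m_{n,\epsilon'}(p_1,\ldots,p_r)$. The polynomial inequality $\|p_j(\varphi(t))\|<\epsilon'$ places $\varphi(t)$ inside $\mathbb{ZI}^m_{n,\epsilon'}(p_1,\ldots,p_r)$ for every $t$; the interpolation is given; and after pre-arranging $\bar{\delta}\leq\bar{\varepsilon}/2$, the triangle inequality gives $\eth(\varphi(t),\mathbf{X})\leq\eth(\varphi(t),\mathbf{Y})+\eth(\mathbf{Y},\mathbf{X})\leq\bar{\varepsilon}/2+\bar{\varepsilon}/2=\bar{\varepsilon}$, so $\varphi(t)\in B_\eth(\mathbf{X},\bar{\varepsilon})$. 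The essentially only obstacle is bookkeeping: one must recognize that the theorem's $\delta'$ is an upper-bounded free parameter (as the theorem's proof makes clear) rather than an output, so that it can legitimately be identified with the set tolerance $\epsilon'$; once that identification is allowed, the corollary is a one-paragraph translation of Theorem \ref{local_connectivity_m_tuples_almost_algebraic_commuting_contractions}.
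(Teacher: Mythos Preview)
Your proposal is correct and follows essentially the same approach as the paper: both derive the corollary by a direct invocation of Theorem \ref{local_connectivity_m_tuples_almost_algebraic_commuting_contractions}, identifying the theorem's tolerance $\delta'$ with the set parameter $\epsilon'$ and reading off the uniform piecewise differentiable path connectivity from the theorem's conclusion. Your write-up is in fact more careful than the paper's own one-sentence proof, since you explicitly track the quantifier order, pin down $\epsilon$ as the separation constant $\delta_1$, and use the triangle inequality to switch the center of the $\varepsilon$-ball from $\mathbf{Y}$ to $\mathbf{X}$; the paper simply asserts that the theorem yields suitable $\epsilon,\delta>0$ without spelling any of this out.
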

\begin{proof}
Given $\varepsilon>0$. Let us consider any $r$ non-constant polynomials $p_1(x_1,\ldots,x_m),\\
\ldots,p_r(x_1,\ldots,x_m)$ of $m$ complex variables, with coefficients over $\CC$, and with finite zero set $\mathbf{Z}(p_1,\ldots,p_r)\subset\RR^m$. By Theorem \ref{local_connectivity_m_tuples_almost_algebraic_commuting_contractions} we will have that 
there are $\epsilon,\delta>0$, such that for any $\epsilon'\leq \epsilon$, any $\mathbf{X}\in \mathbb{ZI}_{n,\epsilon'}^m(p_1,\ldots,p_r)$ and any $\mathbf{Y}\in \mathbb{ZI}_{n,\epsilon'}^m(p_1,\ldots,p_r)\cap B_{\eth}(\mathbf{X},\delta)$, there is a piecewise $C^1$ path $\gamma\in C([0,1],\mathbb{ZI}_{n,\epsilon'}^m(p_1,\ldots,p_r))$ such that, $\gamma(0)=\mathbf{X}$, $\gamma(1)=\mathbf{Y}$ and $\gamma(t)\in B_\eth(\mathbf{X},\varepsilon)$, for each $0\leq t \leq 1$.
\end{proof}

\begin{theorem}
\label{second_connectivity_application}
Given  $r$ non-constant polynomials $p_1(x_1,\ldots,x_m),\ldots,p_r(x_1,\ldots,x_m)$ of $m$ complex variables, with coefficients over $\CC$, and with finite zero set $\mathbf{Z}(p_1,\ldots,p_r)\\
\subset\CC^m$. There is $\epsilon>0$ such that, for each $0<\epsilon'\leq \epsilon$,  the $\epsilon'$-nearly algebraic matrix $m$-disk $\mathbb{ZD}_{n,\epsilon'}^m(p_1,\ldots,p_r)$ is uniformly piecewise differentiably path connected with respect to the metric $\eth$.
\end{theorem}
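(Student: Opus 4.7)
The plan is to reduce Theorem \ref{second_connectivity_application} to its hermitian analogue, Corollary \ref{second_connectivity_corollary}, via the juncture--partition correspondence $\hat{\pi}/\upsilon$ and the real--imaginary splitting of complex polynomials, exactly as Theorem \ref{first_connectivity_application} was deduced from Corollary \ref{first_connectivity_corollary}. From the given $p_1,\ldots,p_r\in\mathbb{C}[z_1,\ldots,z_m]$ I would form the $2r$ real polynomials $\mathrm{Re}(p_1),\mathrm{Im}(p_1),\ldots,\mathrm{Re}(p_r),\mathrm{Im}(p_r)$ in $2m$ real variables; the bijection $\imath,\kappa$ used in the proof of Theorem \ref{first_connectivity_application} identifies $\mathbf{Z}(p_1,\ldots,p_r)\subset\mathbb{C}^m$ with $\mathbf{Z}(\mathrm{Re}(p_1),\mathrm{Im}(p_1),\ldots,\mathrm{Re}(p_r),\mathrm{Im}(p_r))\subset\mathbb{R}^{2m}$, preserving finiteness, so Corollary \ref{second_connectivity_corollary} applies to these real polynomials and produces a tolerance $\eta>0$ controlling uniform path connectivity of the corresponding $\eta'$-nearly algebraic hermitian cubes.

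The bridging identity is that for any $\mathbf{X}\in\mathbb{D}^m_n$,
\[
p_j(\mathbf{X})=\mathrm{Re}(p_j)(\hat{\pi}(\mathbf{X}))+\mathbf{i}\,\mathrm{Im}(p_j)(\hat{\pi}(\mathbf{X})),
\]
the two summands being commuting hermitian matrices (since the components of $\mathbf{X}$ are commuting normals). The elementary estimate $\max\{\|B\|,\|C\|\}\leq\|B+\mathbf{i}C\|\leq\sqrt{2}\max\{\|B\|,\|C\|\}$ for commuting hermitian $B,C$ then yields the two-sided transport $\hat{\pi}(\mathbb{ZD}^m_{n,\epsilon'}(p_1,\ldots,p_r))\subseteq\mathbb{ZI}^{2m}_{n,\epsilon'}(\mathrm{Re}(p_1),\mathrm{Im}(p_1),\ldots)$ and $\upsilon(\mathbb{ZI}^{2m}_{n,\eta'}(\ldots))\subseteq\mathbb{ZD}^m_{n,\sqrt{2}\eta'}(p_1,\ldots,p_r)$. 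Together with the Lipschitz estimates $\eth(\hat{\pi}(\mathbf{X}),\hat{\pi}(\mathbf{Y}))\leq\eth(\mathbf{X},\mathbf{Y})$ and $\eth(\upsilon(\mathbf{S}),\upsilon(\mathbf{T}))\leq 2\eth(\mathbf{S},\mathbf{T})$, this sets up the desired reduction.

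Given $\varepsilon>0$, I would apply Corollary \ref{second_connectivity_corollary} to the $2r$ real polynomials at spatial scale $\varepsilon/2$ to extract a radius $\delta>0$. For $\mathbf{X},\mathbf{Y}\in\mathbb{ZD}^m_{n,\epsilon'}(p_1,\ldots,p_r)$ with $\eth(\mathbf{X},\mathbf{Y})<\delta$, the partitions $\hat{\pi}(\mathbf{X}),\hat{\pi}(\mathbf{Y})$ lie in $\mathbb{ZI}^{2m}_{n,\epsilon'}$ and are within $\delta$ in $\eth$, so the corollary yields a piecewise $C^1$ path $\gamma_H$ in $\mathbb{ZI}^{2m}_{n,\epsilon'}$ connecting them inside $B_\eth(\hat{\pi}(\mathbf{X}),\varepsilon/2)$. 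Setting $\gamma:=\upsilon\circ\gamma_H$, the estimates above give $\gamma(0)=\mathbf{X}$, $\gamma(1)=\mathbf{Y}$, and $\eth(\gamma(t),\mathbf{X})\leq\varepsilon$; pairwise commutativity, normality of $\gamma(t)$, and the identity $p_j(\gamma(t))=\mathrm{Re}(p_j)(\gamma_H(t))+\mathbf{i}\,\mathrm{Im}(p_j)(\gamma_H(t))$ all follow from the structure-preserving properties of $\upsilon$, and piecewise $C^1$ regularity descends from $\gamma_H$.

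The main obstacle, and the one place where genuine care is needed, is the constant factor $\sqrt{2}$ in the tolerance transport: the naive push-forward lands in $\mathbb{ZD}^m_{n,\sqrt{2}\epsilon'}$ rather than $\mathbb{ZD}^m_{n,\epsilon'}$. I would handle this exactly as the analogous factor-of-$2$ metric inflation is handled in Theorem \ref{first_connectivity_application}, by choosing the universal $\epsilon$ small enough relative to the zero-set spacing of $\mathbf{Z}(p_1,\ldots,p_r)$, in the same spirit as the $\delta_1,\delta_3$ bookkeeping performed in the proof of Theorem \ref{local_connectivity_m_tuples_almost_algebraic_commuting_contractions}. This ensures that the intermediate algebraic approximants introduced inside Corollary \ref{second_connectivity_corollary} satisfy the sharper tolerance $\epsilon'/\sqrt{2}$, so that the resulting path in the nearly algebraic disk meets the required tolerance $\epsilon'$ rather than an inflated multiple of it.
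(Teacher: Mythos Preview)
Your proposal is correct and follows essentially the same route as the paper's own proof: reduce to the hermitian case via the partition/juncture maps $\hat{\pi},\upsilon$ and the real--imaginary splitting of the $p_j$, then invoke the hermitian nearly-algebraic result (the paper cites Theorem~\ref{local_connectivity_m_tuples_almost_algebraic_commuting_contractions} directly rather than its restatement Corollary~\ref{second_connectivity_corollary}, which amounts to the same thing). The only noteworthy differences are cosmetic: you use the sharper estimate $\|B+\mathbf{i}C\|\leq\sqrt{2}\max\{\|B\|,\|C\|\}$ for commuting hermitian $B,C$ where the paper uses the triangle-inequality bound with factor~$2$, and you are more explicit than the paper about the tolerance-inflation issue---the paper simply asserts that $\gamma_H$ lies in $\mathbb{ZI}^{2m}_{n,\epsilon'/2}$ without spelling out why, whereas you correctly note that this requires going back into the proof of Theorem~\ref{local_connectivity_m_tuples_almost_algebraic_commuting_contractions} (where the middle segment of the path lies in the exact zero set and the flanking linear segments have tolerance governed by the zero-set spacing, not by the input tolerance).
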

\begin{proof}
Given $\varepsilon>0$. Let us consider any $r$ non-constant polynomials $p_1(x_1,\ldots,x_m),\\
\ldots,p_r(x_1,\ldots,x_m)$ of $m$ complex variables, with coefficients over $\CC$, and with finite zero set $\mathbf{Z}(p_1,\ldots,p_r)\subset\CC^m$. 

By elementary theory of complex valued functions of several complex variables, we will have that there are $2r$ polynomials $\mathrm{Re}(p_1),\ldots,\mathrm{Re}(p_m)$, $\mathrm{Im}(p_1),\ldots,\mathrm{Im}(p_m)$ in $2m$ real variables $\mathrm{Re}(z_1),\ldots,\mathrm{Re}(z_m)$, 
$\mathrm{Im}(z_1),\ldots,\mathrm{Im}(z_m)$, with coefficients over $\mathbb{R}$, such that $\mathbf{z}=(z_1,\ldots,z_m)\in \mathbf{Z}(p_1,\ldots,p_r)\subset \mathbb{C}^m$ if and only if $(\mathrm{Re}(\mathbf{z}),\mathrm{Im}(\mathbf{z}))\in \mathbf{Z}(\mathrm{Re}(p_1),\mathrm{Im}(p_1),\ldots,\mathrm{Re}(p_r),\mathrm{Im}(p_r))\subset \mathbb{R}^{2m}$, where $\mathrm{Re}(\mathbf{z})=(\mathrm{Re}(z_1),\ldots,\mathrm{Re}(z_m))$ and 
$\mathrm{Im}(\mathbf{z})=(\mathrm{Im}(z_1),\ldots,\mathrm{Im}(z_m))$.

Let us consider the maps $\imath:\mathbb{C}^m\to \mathbb{R}^{2m},\mathbf{z}\mapsto (\mathrm{Re}(\mathbf{z}),\mathrm{Im}(\mathbf{z}))$ and $\kappa:\mathbb{R}^{2m}\to \mathbb{C}^{m},\mathbf{x}\mapsto (x_1+\mathbf{i}x_{m+1},\ldots,x_m+\mathbf{i}x_{2m})$. By the arguments in the previous paragraph we have that there is a one-to-one correspondence between $\mathbf{Z}(p_1,\ldots,p_r)$ and $\mathbf{Z}(\mathrm{Re}(p_1),\mathrm{Im}(p_1),\ldots,\mathrm{Re}(p_r),\mathrm{Im}(p_r))$ induced by $\imath\circ \kappa$ and $\kappa\circ \imath$. This in turn implies that there is a one-to-one correspondence between $\mathbb{ZD}^m_n(p_1,\ldots,p_r)$ and $\mathbb{ZI}^m_n(\mathrm{Re}(p_1),\mathrm{Im}(p_1),\ldots,\mathrm{Re}(p_r),\mathrm{Im}(p_r))$, induced by the maps $\hat{\pi}\circ \upsilon$ and $\upsilon\circ \hat{\pi}$ defined in \S\ref{notation}.

By definition of $\hat{\pi}$ and $\upsilon$ and by the arguments in the previous paragraph, for any $\mathbf{X},\mathbf{Y}\in \mathbb{ZD}^m_n(p_1,\ldots,p_r)$, on one hand we will have that $\hat{\pi}(\mathbf{X}),\hat{\pi}(\mathbf{Y})\in \mathbb{ZI}^m_n(\mathrm{Re}(p_1),\mathrm{Im}(p_1),\ldots,\mathrm{Re}(p_r),\mathrm{Im}(p_r))$ and $\eth(\hat{\pi}(\mathbf{X}),\hat{\pi}(\mathbf{Y}))\leq \eth(\mathbf{X},\mathbf{Y})$. On the other hand, for any 
$\mathbf{S},\mathbf{T}\in \mathbb{ZI}^m_n(\mathrm{Re}(p_1),\mathrm{Im}(p_1),\ldots,\mathrm{Re}(p_r),\mathrm{Im}(p_r))$, $\upsilon(\mathbf{S}),\upsilon(\mathbf{T})\in \mathbb{ZD}^m_n(p_1,\ldots,p_r)$ and $\eth(\upsilon(\mathbf{X}),\upsilon(\mathbf{X}))\leq 2\eth(\mathbf{S},\mathbf{T})$.

Given $\nu>0$. By continuity of $p_1,\ldots,p_r$, on one hand, we have that for any $\mathbf{X}=(X_1,\ldots,X_m)\in \mathbb{ZD}^m_{n,\nu}(p_1,\ldots,p_r)$, $\max\{\|\mathrm{Re}(p_j(\hat{\pi}(\mathbf{X})))\|,\|\mathrm{Im}(p_j(\hat{\pi}(\mathbf{X})))\|\}\leq (1/2)(\|p_j(X_1,\ldots,X_m)\|
+\|(p_j(X_1,\ldots,X_m))^\ast\|)\leq \|p_j(X_1,\ldots,X_m))\|\leq\nu$, $1\leq j\leq r$. On the other hand, we have that for any $\mathbf{H}=(H_1,\ldots,H_{2m})\in \mathbb{ZI}^{2m}_{n,\nu/2}(\mathrm{Re}(p_1),\\
\mathrm{Im}(p_1),\ldots
,\mathrm{Re}(p_r),\mathrm{Im}(p_r))$, $\|p_j(\upsilon(\mathbf{H}))\|\leq \|\mathrm{Re}(p_j(\mathbf{H}))\|+\|\mathrm{Im}(p_j(\mathbf{H}))\|\}\leq (2\nu/2)=\nu$, $1\leq j\leq r$.

By Theorem \ref{local_connectivity_m_tuples_almost_algebraic_commuting_contractions} and by the arguments in the previous paragraphs, we will have that there are $\delta,\epsilon>0$, such that for any $\epsilon'\leq \epsilon$, any $\mathbf{X}\in \mathbb{ZD}_{n,\epsilon'}^m(p_1,\ldots,p_r)$ and any $\mathbf{Y}\in \mathbb{ZD}_{n,\epsilon'}^m(p_1,\ldots,p_r)\cap B_{\eth}(\mathbf{X},\delta)$, there is a piecewise $C^1$ path $\gamma_H\in C([0,1],\mathbb{ZI}_{n,\epsilon'/2}^{2m}(\mathrm{Re}(p_1),\mathrm{Im}(p_1),\ldots,\mathrm{Re}(p_r),\mathrm{Im}(p_r)))$ such that, $\gamma_H(0)=\hat{\pi}(\mathbf{X})$, 
\\
$\gamma_H(1)=\hat{\pi}(\mathbf{Y})$ and $\gamma_H(t)\in B_\eth(\hat{\pi}(\mathbf{X}),\varepsilon/2)$, for each $0\leq t \leq 1$. This in turn implies that the path 
$\gamma=\upsilon\circ \gamma_H\in C([0,1],\mathbb{ZD}_{n,\epsilon'}^m(p_1,\ldots,p_r))$, which is clearly piecewise $C^1$, satisfies the conditions $\gamma(0)=\mathbf{X}$, $\gamma(1)=\mathbf{Y}$ and $\gamma_H(t)\in B_\eth(\hat{\pi}(\mathbf{X}),\varepsilon)$, for each $0\leq t \leq 1$. This completes the proof.
\end{proof}

\section{Hints and Future Directions}
We will study the potential extension of our techniques to almost normal matrices, normal approximants, normal dilations and normal compressions. We will explore the computability of projective refinemets and approximate joint diagonalizers, together with their impact on the $\varepsilon-\delta$ relations, presented in the main connectivity results in \S\ref{main_results} and \S\ref{extended_main_results}.

\vspace{2pc}

\bigskip

{\bf Acknowledgment.} 
I would like to thank the Fields Institute for Research in Mathematical Sciences at the University of Toronto, for the support received during my research visit to the institute from July 31 to August 4, 2017. Much of the research reported in this document was carried out while I was visiting the Institute.

I am grateful with Terry Loring, Stanly Steinberg, Alexandru Buium, Moody Chu, Daniel Appel\"o, Alexandru Chirvasitu and Concepci\'on Ferrufino, for several interesting questions and comments that have been very helpful for the preparation of this document.


\end{document}